\newtheorem{theorem}{Theorem}[section]
\newtheorem{lemma}[theorem]{Lemma}
\newtheorem{corollary}[theorem]{Corollary}
\newtheorem{prop}[theorem]{Proposition}
\theoremstyle{definition}
\theoremstyle{remark}
\newtheorem{remark}[theorem]{Remark}
\numberwithin{equation}{section}
\newcommand{\calA}{\mathcal{A}}
\newcommand{\calB}{\mathcal{B}}
\newcommand{\calO}{\mathcal{O}}
\def\Aut{{\text{Aut}}}
\def\PGL{{\text{PGL}}}
\def\Num{{\rm{Num}}}
\def\NS{{\rm{NS}}}
\def\deg{{\text{deg}}}
\def\Num{{\text{Num}}}
\begin{document}
\title [Enriques surfaces] {A 1-dimensional family of Enriques surfaces in characteristic 2 covered by the supersingular K3 surface with Artin invariant 1}

\author{Toshiyuki Katsura}
\address{Faculty of Science and Engineering, Hosei University,
Koganei-shi, Tokyo 184-8584, Japan}
\email{toshiyuki.katsura.tk@hosei.ac.jp}

\author{Shigeyuki Kond\=o}
\address{Graduate School of Mathematics, Nagoya University, Nagoya,
464-8602, Japan}
\email{kondo@math.nagoya-u.ac.jp}
\thanks{Research of the first author is partially supported by
Grant-in-Aid for Scientific Research (C) No. 24540053, and the second author by (S), No 22224001.}

\begin{abstract}
We give a 1-dimensional
family of classical and supersingular Enriques surfaces in characteristic 2 covered by the supersingular $K3$ surface with Artin invariant 1. Moreover we show that there exist  $30$ nonsingular
rational curves and ten non-effective $(-2)$-divisors on
these Enriques surfaces whose reflection group is of finite index in the orthogonal group of the N\'eron-Severi lattice modulo torsion.
\end{abstract}
\maketitle

\section{Introduction}\label{s1}
We work over an algebraically closed field $k$ of characteristic 2.
The main purpose of this paper is to give a 1-dimensional
family of Enriques surfaces in characteristic 2 covered by the supersingular $K3$ surface with Artin invariant 1.
In the paper \cite{BM2}, Bombieri and Mumford classified
Enriques surfaces into three classes, namely, singular, classical and supersingular Enriques surfaces.  
As in the case of characteristic $0$, an Enriques surface
$X$ in characteristic 2 has a canonical double cover
$\pi : Y \to X$.  The $\pi$ is a separable double cover,
a purely inseparable $\mu_2$- or $\alpha_2$-cover according to $X$ being singular, classical or supersingular. The surface $Y$ might have singularities, but it is $K3$-like in the sense
that its dualizing sheaf is trivial.  Bombieri and Mumford gave an explicit example of each type of Enriques surface
as a quotient of the intersection of three quadrics in ${\bf P}^5$. In particular, they gave an $\alpha_2$-covering $Y\to X$ such that $Y$ is a supersingular $K3$ surface with 12 rational double points of type $A_1$. 
Recently Liedtke \cite{L} showed that the moduli space of
Enriques surfaces with a polarization of degree $4$ has two $10$-dimensional irreducible components.  A general member of one component (resp. the other component) consists of singular (resp. classical) Enriques surfaces.  The intersection of two components parametrizes supersingular Enriques surfaces.
On the other hand, Ekedahl, Hyland and Shepherd-Barron
\cite{EHS} studied classical or supersingular Enriques surfaces whose canonical covers are supersingular $K3$ surfaces with 12 rational double points of type $A_1$.  They showed that
the moduli space of such Enriques surfaces is an open piece of
a ${\bf P}^1$-bundle over the moduli space of supersingular $K3$ surfaces.
Recall that the moduli space of supersingular $K3$ surfaces is 9-dimensional and
is stratified by Artin invariant $\sigma$, $1\leq \sigma \leq 10$. Each stratum has dimension $\sigma -1$ (Artin \cite{A}, Rudakov-Shafarevich \cite{RS2}).

In this paper, stimulating by Ekedahl, Hyland and Shepherd-Barron's work, we present a 1-dimensional family
of Enriques surfaces whose canonical covers are the (unique) supersingular $K3$ surface with Artin invariant 1.  These Enriques surfaces are parametrized by $a,b \in k,\ a+b=ab,\ a^3\not=1.$  If $a=b=0$, then the Enriques surface is supersingular, and otherwise it is classical (Theorem \ref{main}).  To construct these Enriques surfaces, we consider an elliptic surface defined
by
$$y^2+y+x^3 + s x(y^2 +y+1)=0$$
which has four singular fibers of type $I_3$ over
$s=1,\omega, \omega^2, \infty$ ($\omega^3=1, \omega\not=1$).
By taking Frobenius base change $s=t^2$, we have
an elliptic surface 
$$y^2+y+x^3 + t^2x(y^2 +y+1)=0.$$
which has 12 rational double points of type $A_1$ at
the singular points of each singular fiber.
By taking the minimal nonsingular model, we have an elliptic $K3$
surface $f : Y \to {\bf P}^1$ which is supersingular because
$f$ has four singular fibers of
type $I_6$ and hence its Picard number should be 22.
The Enriques surface $X=X_{a,b}$ is obtained as the quotient surface of $Y$ 
by a rational vector field
$$D = \frac{1}{t - 1}\left((t - 1)(t - a)(t - b)\frac{\partial}{\partial t} + (1 + t^2x)\frac{\partial}{\partial x}\right).
$$
The construction is based on a theory of inseparable double covering due to 
Rudakov-Shafarevich \cite{RS}
(see also Katsura-Takeda \cite{KT}).

The supersingular $K3$ surface $Y$ has Artin invariant $1$.  It was studied by
Dolgachev and the second author \cite{DK} (also see Katsura-Kondo \cite{KK}). It contains
42 nonsingular rational curves forming $(21)_5$-configuration.  These 42 curves are nothing but 24 components of four singular fibers of type $I_6$ and 18 sections of the fibration $f$.  
The automorphism group $\Aut(Y)$ is generated by
a subgroup $\PGL(3,{\bf F}_4)\cdot {\bf Z}/2{\bf Z}$ and
$168$ involutions associated with some $(-4)$-divisors on $Y$.  From this description, we see that
there exist 30 nonsingular rational curves and ten non-effective $(-2)$-divisors on the Enriques surface $X$ (see  Sections \ref{s5}, \ref{s6}).
The dual graph $\Gamma$ of these 40 divisors coincides with a graph 
obtained from an incidence relation between fifteen transpositions, fifteen permutations of type $(12)(34)(56)$ and
ten permutations of type $(123)(456)$ in the symmetric group $\mathfrak{S}_6$ of degree six.  Recall that fifteen transpositions are called Sylvester's duads and fifteen  permutations of type
$(12)(34)(56)$ Sylvester's synthemes (see Baker \cite{Ba}, p.220).   
It is possible
to choose a set of five synthemes which together contain all the
fifteen duads.  Such a family is called a total.  The number of 
possible totals is six.  And every two totals have one, and only one
syntheme in common between them.
We remark that there exist twelve (= six plus six) points on $X$ which have the following property:
if we denote by $1,2,\ldots, 6$ and $A, B, \ldots, F$ these twelve points suitably, then
the nodal curve corresponding to the transposition $ij$ passes the points $i$ and $j$, and 
the six points $A, B, \ldots, F$ can be considered as six totals so that 
the nodal curve corresponding to a syntheme appeared in two synthemes, for example, A and
B, passes the points $A$ and $B$ (see Section \ref{s5}).
Moreover these 40 divisors have the following remarkable property.  Let $\Num(X)=\NS(X)/{\rm \{ torsion\} }$ be the  N\'eron-Severi group of $X$ modulo torsion.  Then, together with the intersection pairing, it has a structure of an even unimodular lattice of signature $(1,9)$.  Let ${\rm O}(\Num(X))$ be the orthogonal group of the lattice $\Num(X)$ and
let $W(\Gamma)$ be the subgroup of ${\rm O}(\Num(X))$ generated by reflections associated with $40$ $(-2)$-divisors.
Then $W(\Gamma)$ is of finite index in ${\rm O}(\Num(X))$
(Theorem \ref{auto}).
This property will be helpful for determining the automorphism group $\Aut(X)$.

\medskip
\noindent
{\bf Acknowledgement.} The authors thank Shigeru Mukai for 
valuable conversations.

\section{Preliminaries}\label{sec2}

Let $k$ be an algebraically closed field of characteristic $p > 0$,
and let $S$ be a nonsingular complete algebraic surface defined over $k$.
We denote by $K_{S}$ a canonical divisor of $S$.
A rational vector field $D$ on $S$ is said to be $p$-closed if there exists
a rational function $f$ on $S$ such that $D^p = fD$. Let 
$\{U_{i} = {\rm Spec} A_{i}\}$ be an affine open covering of $S$. We set 
$A_{i}^{D} = \{D(\alpha) = 0 \mid \alpha \in A_{i}\}$. 
Affine varieties $\{U_{i}^{D} = {\rm Spec} A_{i}^{D}\}$ glue togather to 
define a normal quotient surface $S^{D}$.

Now, we  assume $D$ is $p$-closed. Then,
the natural morphism $\pi : S \longrightarrow S^D$ is a purely
inseparable morphism of degree $p$. 
If the affine open covering $\{U_{i}\}$ of $S$ is fine enough, then
taking local coordinate $x_{i}, y_{i}$
on $U_{i}$, we see that there exsit $g_{i}, h_{i}\in A_{i}$ and 
a rational function $f_{i}$
such that $g_{i} = 0$ and $h_{i} = 0$ have no common divisor,
and such that
$$
 D = f_{i}\left(g_{i}\frac{\partial}{\partial x_{i}} + h_{i}\frac{\partial}{\partial y_{i}}\right)
\quad \mbox{on}~U_{i}.
$$
By Rudakov-Shafarevich \cite{RS}, divisors $(f_{i})$ on $U_{i}$
give a global divisor $(D)$ on $S$, and zero-cycles defined
by the ideal $(g_{i}, h_{i})$ on $U_{i}$ give a global zero cycle 
$\langle D \rangle $ on $S$. A point contained in the support of
$\langle D \rangle $ is called an isolated singular point of $D$.
If $D$ has no isolated singular point, $D$ is said to be divisorial.
Rudakov and Shafarevich showed that $S^D$ is nonsingular
if and only if $\langle D \rangle  = 0$, i.e., $D$ is divisorial.
When $S^D$ is nonsingular,
they also showed a canonical divisor formula
\begin{equation}\label{canonical}
K_{S} \sim \pi^{*}K_{S^D} + (p - 1)(D),
\end{equation}
where $\sim$ means linear equivalence.
As for the Euler number $c_{2}(S)$ of $S$, we have a formula
\begin{equation}\label{euler}
c_{2}(S) = \deg \langle D \rangle  - \langle K_{S}, (D)\rangle - (D)^2
\end{equation}
(cf. Katsura-Takeda \cite{KT}). This is the dual version of Igusa's
formula (cf. Igusa \cite{I}).

Now we consider an irreducible curve $C$ on $S$ and we set $C' = \pi (C)$.
Take an affine open set $U_{i}$ above such that $C \cap U_{i}$ is non-empty.
The curve C is said to be integral with respect to the vector field $D$
if $(g_{i}\frac{\partial}{\partial x_{i}} + h_{i}\frac{\partial}{\partial y_{i}})$
is tangent to $C$ at a general point of $C \cap U_{i}$. Then, Rudakov-Shafarevich
\cite{RS} showed the following proposition:

\begin{prop}\label{insep}
%\begin{itemize}
%\item[$({\rm i})$]

$({\rm i})$  If $C$ is integral, then $C = \pi^{-1}(C')$ and $C^2 = pC'^2$.

%\item[$({\rm ii})$] 
$({\rm ii})$  If $C$ is not integral, then $pC = \pi^{-1}(C')$ and $pC^2 = C'^2$.
%\end{itemize}
\end{prop}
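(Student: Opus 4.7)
The plan is to reduce both parts to a single local structural statement about the scheme-theoretic preimage $\pi^{-1}(C')$, and then derive the self-intersection numbers by the projection formula. Since $\pi : S \to S^D$ is purely inseparable of degree $p$, it is a bijection on points, so set-theoretically $\pi^{-1}(C') = C$. Because $S$ is normal, this forces an equality of divisors of the form $\pi^{*}C' = nC$ for some positive integer $n$. The ramification-degree identity for the finite surjection $\pi$ then gives $n \cdot [k(C):k(C')] = \deg\pi = p$, and since $p$ is prime we land in exactly one of the cases $(n, [k(C):k(C')]) = (1,p)$ or $(p,1)$.

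The heart of the proof is thus to match these two numerical alternatives with the dichotomy integral vs.\ non-integral. I would work in a local chart $U_i$ with coordinates $x_i, y_i$ on which $D = f_i(g_i\partial_{x_i} + h_i\partial_{y_i})$. Let $\phi\in A_i$ be a local equation for $C$. After absorbing the rational factor $f_i$ (which, having no influence on $\langle D\rangle$ along $C$, does not affect the kernel computation), the statement $C$ is integral amounts to $D_0(\phi)\in(\phi)$, where $D_0 = g_i\partial_{x_i}+h_i\partial_{y_i}$. When this holds, $D_0$ descends to a derivation $\bar D_0$ on $\mathcal{O}_C$; moreover $\bar D_0\neq 0$ generically on $C$, for otherwise $D_0(x_i)=g_i$ and $D_0(y_i)=h_i$ would both lie in $(\phi)$, contradicting the hypothesis that $(g_i,h_i)$ has no common divisor. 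Hence $k(C)/k(C')$ is purely inseparable of degree $p$, forcing $n=1$ and $\pi^{*}C'=C$. Conversely, if $C$ is not integral, $D_0$ is transverse to $C$ at a generic point, $\pi|_C : C\to C'$ is birational, so $[k(C):k(C')]=1$ and $n=p$, giving $\pi^{*}C'=pC$.

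The self-intersection identities now follow from the projection formula together with $\pi_{*}C = [k(C):k(C')]\, C'$. In case $(\mathrm{i})$,
\[
C^{2} = (\pi^{*}C')\cdot C = C'\cdot \pi_{*}C = C'\cdot (p\,C') = p\,C'^{2}.
\]
In case $(\mathrm{ii})$, using $\pi^{*}C'=pC$ and $\pi_{*}C = C'$,
\[
p\,C^{2} = (\pi^{*}C')\cdot C = C'\cdot \pi_{*}C = C'^{2}.
\]

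The only genuinely delicate step is the equivalence between $C$ being integral and the function-field extension $k(C)/k(C')$ having degree $p$, i.e., identifying $\mathcal{O}_{C'}$ with the invariants $\mathcal{O}_C^{\bar D_0}$ in the integral case. The non-existence of a common divisor of $g_i$ and $h_i$ is exactly what is needed to rule out the degenerate possibility that $\bar D_0$ is identically zero on $C$, so this is where the hypothesis of the Rudakov--Shafarevich setup enters essentially. Once this is established, everything else is formal manipulation with the projection formula on the (possibly singular, but normal) surface $S^{D}$, where the intersection pairing with $\mathbb Q$-coefficients is legitimate.
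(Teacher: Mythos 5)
The paper does not prove this proposition at all: it is quoted verbatim from Rudakov--Shafarevich \cite{RS}, so there is no internal proof to compare against. Your reconstruction is essentially the standard argument from that source, and its skeleton is correct: purely inseparable implies $\pi^{*}C'=nC$ for a single $n$, the identity $n\cdot[k(C):k(C')]=p$ forces the dichotomy $(1,p)$ or $(p,1)$, and the projection formula (or simply $(\pi^{*}C')^{2}=p\,C'^{2}$) yields the self-intersection statements. Your treatment of the integral case is complete: tangency lets $D_0$ descend to a nonzero derivation $\bar D_0$ on $k(C)$ (nonzero precisely because $g_i,h_i$ have no common divisor), and since $k(C)^{p}\subseteq k(C')\subseteq k(C)^{\bar D_0}\subsetneq k(C)$ with $[k(C):k(C)^{p}]=p$, one gets $[k(C):k(C')]=p$ and $n=1$. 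The one step you assert rather than prove is the converse: ``$D_0$ transverse to $C$ implies $\pi|_C$ birational.'' This does need an argument, but a short one closes it. Work in the DVR $\mathcal{O}_{S,C}$ with uniformizer $\phi$ and note first that integrality is independent of the chosen local equation (if $D_0(\phi)\in(\phi)$ and $\phi'=w\phi$ then $D_0(\phi')\in(\phi')$). If $n=1$, the subring $\mathcal{O}_{S,C}^{D_0}$ contains a uniformizer $u$ of $\mathcal{O}_{S,C}$, and $D_0(u)=0\in(u)$ exhibits $C$ as integral; contrapositively, non-integral forces $n=p$ and hence $[k(C):k(C')]=1$. (Equivalently: for $u=w\phi^{e}\in\mathcal{O}_{S,C}^{D_0}$ one has $0=D_0(w)\phi^{e}+ew\phi^{e-1}D_0(\phi)$, and if $D_0(\phi)$ is a unit and $p\nmid e$ the second term has strictly smaller valuation, a contradiction.) With that line added, your proof is complete and agrees with the argument the paper is implicitly relying on.
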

In Section 4, we will use these results to construct Enriques surfaces
in characteristic 2.

A lattice is a free abelian group $L$ of finite rank equipped with 
a non-degenerate symmetric integral bilinear form $\langle . , . \rangle : L \times L \to {\bf Z}$. 
For a lattice $L$ and an integer 
$m$, we denote by $L(m)$ the free ${\bf Z}$-module $L$ with the bilinear form obtained from the bilinear form of $L$ by multiplication by $m$. The signature of a lattice is the signature of the real vector space $L\otimes {\bf R}$ equipped with the symmetric bilinear form extended from one on $L$ by linearity. A lattice is called even if 
$\langle x, x\rangle \in 2{\bf Z}$ 
for all $x\in L$. 
We denote by $U$ the even unimodular lattice of signature $(1,1)$, 
and by $A_m, \ D_n$ or $\ E_k$ the even {\it negative} definite lattice defined by
the Cartan matrix of type $A_m, \ D_n$ or $\ E_k$ respectively.    
We denote by $L\oplus M$ the orthogonal direct sum of lattices $L$ and $M$, and by $L^{\oplus m}$ the orthogonal direct sum of $m$-copies of $L$.
Let ${\rm O}(L)$ be the orthogonal group of $L$, that is, the group of isomorphisms of $L$ preserving the bilinear form.

\section{An elliptic pencil}\label{sec3}

From here on, throughtout this paper, we assume that $k$ is an algebraically closed field
of characteristic 2.
On the projective plane ${\bf P}^2$ over $k$, we consider the supersingular
elliptic curve $E$ defined by
$$
     x_1^2x_2 + x_1x_2^2 = x_0^3,
$$
where $(x_0, x_1, x_2)$ is a homogeneous coordinate of ${\bf P}^2$.
This is, up to isomorphism, the unique supersingular elliptic curve in characteristic 2.
The 3-torsion points of $E$ are given by
$$
\begin{array}{l}
Q_{0} =(0,1, 0), Q_{1} =(0, 0, 1), Q_{2} =(0,1, 1), Q_{3} =(1, \omega, 1),
 Q_{4} =(\omega,\omega, 1) \\
Q_{5} =(\omega^2,\omega, 1), Q_{6} =(1,\omega^2, 1), Q_{7} =(\omega,\omega^2, 1),
 Q_{8} =(\omega^2,\omega^2, 1).
\end{array}
$$
The point $Q_{0}$ is the zero point of $E$.
There exist 21 ${\bf F}_{4}$-rational points on ${\bf P}^2$, and
among them 9 points $Q_{i}$ ($i = 0, 1, \ldots, 8$) lie on $E$.
On the other hand, there exist 21 lines defined over ${\bf F}_{4}$
on ${\bf P}^2$, and among them 9 lines are triple tangents at $Q_{i}$
($i = 0, 1, \ldots, 8$) of $E$. Tangent lines intersect
$E$ only at the tangent points, and other lines intersect $E$
at 3 points among nine 3-torsion points transversely.

Now we consider the pencil of curves of degree 3, which pass through
nine points $Q_{i}$'s. Then the pencil is given by the equation
\begin{equation}\label{pencil}
x_1^2x_2 + x_1x_2^2 + x_0^3 + sx_0(x_1^2 + x_1x_2 + x_2^2) = 0
\end{equation}
with a parameter $s$. 
As is well-known, by blowing-ups at nine 3-torsion points 
we obtain an elliptic surface $\psi : R \longrightarrow {\bf P}^1$.
On the elliptic surface there exist 4 singular fibers of type $I_{3}$.
Five lines defined over ${\bf F}_{4}$ pass through
the point $Q_{i}$ on $E$. They consist of one triple tangent and four lines 
which intersect $E$ at $Q_{i}$ transversely. 
By the blowing-ups, the triple tangent line goes to the purely
inseparable double-section of the elliptic surface, and the 4 lines
go to components of four singular fibers respectively.
The 9 double sections pass through singular points of singular fibers
three-by-three.
The exeptional curves become nine sections of the elliptic surface
which pass through the regular points of singular fibers.
Each component of singular fibers intersects three sections among nine
exceptional curves.

\section{Construction of Enriques surfaces}\label{s4}

In characteristic 2, a minimal algebaic surface with numerically trivial
canonical divisor is called an Enriques surface if the second Betti
number is equal to 10. Such surfaces $S$ are  devided into three classes
(for details, see Bombieri-Mumford \cite{BM2}):
\begin{itemize}
\item[$({\rm i})$] $K_{S}$ is not linearly equivalent to zero 
and $2K_{S}\sim 0$.  Such an Enriques surface is called a classical Enriques surface.
\item[$({\rm ii})$] $K_{S} \sim 0$, ${\rm H}^{1}(S, {\calO}_{S}) \cong k$
and the Frobenius map acts on  ${\rm H}^{1}(S, {\calO}_S)$ bijectively.
Such an Enriques surface is called a singular Enriques surface.
\item[$({\rm iii})$] $K_{S} \sim 0$, ${\rm H}^{1}(S, {\calO}_{S}) \cong k$
and the Frobenius map is the zero map on  ${\rm H}^{1}(S, {\calO}_S)$.
Such an Enriques surface is called a supersingular Enriques surface.
\end{itemize}

Any elliptic fibration on a classical Enriques surface
has exactly two multiple fibers.  On the other hand, in
case of singular or supersingular Enriques surfaces,
any elliptic fibration has exactly one multiple fiber.

\begin{lemma}\label{lm;singular} 
Let $S$ be an Enriques surface. If there is a generically
surjective rational map from a supersingular $K3$ surface $\tilde{S}$ to $S$,
then $\tilde{S}$ is not a singular Enriques surface.
\end{lemma}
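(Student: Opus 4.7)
The plan is to argue by contradiction, with the strategy having three parts: lift the map through the canonical cover, propagate supersingularity to the cover, and then exploit the Frobenius action to contradict the defining property of a singular Enriques surface. (Note that the conclusion should read ``then $S$ is not a singular Enriques surface,'' since $\tilde S$ is a $K3$ surface by hypothesis.)

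Suppose $S$ is singular with canonical étale $\bbZ/2\bbZ$-cover $\pi\colon X\to S$ (where $X$ is a smooth $K3$ surface), and let $f\colon \tilde S\dashrightarrow S$ be a generically surjective rational map with $\tilde S$ a supersingular $K3$ surface. Since $f$ is a morphism off a finite set of points, purity of the branch locus combined with the Artin--Schreier computation $H^1_{\mathrm{et}}(\tilde S,\bbZ/2\bbZ)=0$ (which uses $H^1(\tilde S,\calO_{\tilde S})=0$ and surjectivity of $F-1$ on the algebraically closed $k$) forces the pullback of $\pi$ to be trivial, yielding a generically surjective rational lift $\tilde f\colon \tilde S\dashrightarrow X$. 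After resolving the indeterminacies of $\tilde f$ we obtain a surjective morphism $\bar f\colon Z\to X$ from a surface $Z$ birational to $\tilde S$. For any odd prime $\ell$ the pullback on $H^2_{\mathrm{et}}(-,\bbQ_\ell)$ is Frobenius-equivariant and injective (via $\bar f_{*}\bar f^{*}=\deg(\bar f)\cdot\mathrm{id}$), and the blow-ups contributing to $Z$ only add algebraic classes on which Frobenius acts as multiplication by $p$; hence every Frobenius eigenvalue on $H^2_{\mathrm{et}}(X,\bbQ_\ell)$ has Newton slope~$1$, so $X$ is itself a supersingular $K3$ surface.

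To derive a contradiction I argue that the $K3$ cover of a singular Enriques surface must be \emph{ordinary}. Since $\Pic^{\tau}(S)=\mu_2$ has no nontrivial $k$-points in characteristic $2$, $S$ admits no nontrivial $2$-torsion line bundle, so $\pi_{*}\calO_X$ is forced into a non-split Artin--Schreier extension
$$
0 \longrightarrow \calO_S \longrightarrow \pi_{*}\calO_X \longrightarrow \calO_S \longrightarrow 0,
$$
whose class $e\in H^1(S,\calO_S)$ is Frobenius-fixed (it is the image of the cover's class under $H^1_{\mathrm{et}}(S,\bbZ/2\bbZ)\to H^1(S,\calO_S)$). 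The associated cohomology long exact sequence, together with $H^1(X,\calO_X)=0$ and $H^2(X,\calO_X)=k$, shows that cup product with $e$ is an isomorphism $H^1(S,\calO_S)\cong H^2(S,\calO_S)$ and that the edge map gives a Frobenius-equivariant isomorphism $H^2(X,\calO_X)\cong H^2(S,\calO_S)$. Bijectivity of $F$ on $H^1(S,\calO_S)=k$ --- the very definition of a singular Enriques surface --- thus transports through the two isomorphisms to bijectivity of $F$ on $H^2(X,\calO_X)$, i.e.\ to the ordinariness of $X$, contradicting its supersingularity.

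The main obstacle is this last step. In characteristic $2$ the direct image $\pi_{*}\calO_X$ does not split as an $\calO_S$-module, so the ``isotypic summand'' decomposition familiar from characteristic zero is unavailable and the Frobenius structure must be carried through the non-split extension via the connecting maps of the long exact sequence. One also needs to verify that the extension class is genuinely nonzero and $F$-fixed, which is where the Artin--Schreier description of the canonical cover for a singular Enriques surface is used essentially; once this transport is under control, the contradiction is immediate.
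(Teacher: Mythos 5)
Your argument reaches the correct conclusion (and you are right that the statement as printed has a typo: the conclusion should be that $S$ is not a singular Enriques surface, which is what the paper's own proof establishes), but it takes a genuinely different and much heavier route. The paper's proof is three lines: by Rudakov--Shafarevich a supersingular $K3$ surface in characteristic $2$ is unirational, hence $S$ is unirational, and by Crew a singular Enriques surface is never unirational. Your proof instead lifts the dominant map through the canonical \'etale double cover $X\to S$ (using $H^1_{\mathrm{et}}(\tilde S,\bbZ/2\bbZ)=0$ and purity), descends supersingularity to $X$, and shows that the $K3$ cover of a singular Enriques surface is ordinary by pushing the bijectivity of $F$ on $H^1(S,\calO_S)$ through the Frobenius-equivariant long exact sequence of $0\to\calO_S\to\pi_*\calO_X\to\calO_S\to 0$. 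That last computation is correct and is the real content of your proof; it yields the sharper statement that the cover is ordinary, and it is essentially the mechanism behind Crew's theorem (whose point is that unirational surfaces in characteristic $p$ admit no nontrivial \'etale $p$-covers). The trade-off: the paper's proof is short and cites two black boxes; yours is self-contained modulo standard facts but long.

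One step needs repair. You deduce that $X$ is supersingular because ``every Frobenius eigenvalue on $H^2_{\mathrm{et}}(X,\bbQ_\ell)$ has Newton slope $1$.'' Over an arbitrary algebraically closed field $k$ of characteristic $2$ there is no Frobenius acting on $\ell$-adic cohomology in the required sense (and Newton slopes belong to crystalline, not $\ell$-adic, cohomology); moreover supersingularity in this paper means $\rho=b_2=22$, and supersingular $K3$ surfaces are not all definable over finite fields, so one cannot simply spread out. The correct and easy replacement: $Z$ is a blow-up of $\tilde S$, so $\rho(Z)=b_2(Z)$; since $\bar f_*\bar f^*=\deg(\bar f)\cdot\mathrm{id}$ on $H^2_{\mathrm{et}}(X,\bbQ_\ell)$, the map $\bar f_*$ is surjective and carries $\NS(Z)\otimes\bbQ_\ell=H^2_{\mathrm{et}}(Z,\bbQ_\ell)$ into $\NS(X)\otimes\bbQ_\ell$, whence $\rho(X)=22$. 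You then still need the standard implication that $\rho(X)=22$ forces $F=0$ on $H^2(X,\calO_X)$ (Artin's inequality $\rho\le 22-2h$ rules out finite height, in particular height $1$); only with this spelled out does your ordinarity computation produce the contradiction.
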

\begin{proof}
By Rudakov-Shafarevich \cite{RS}, $\tilde{S}$ is unirational.
Therefore, $S$ is also unirational. However,
a singular Enriques surface is not unirational by Crew \cite{Cr}
 (also see Katsura \cite{K}).
\end{proof}

In this section, we construct supersingular and classical Enriques surfaces, using 
the rational elliptic surface 
$\psi : R \longrightarrow {\bf P}^1$
constructed in Section 3 (see the equation (\ref{pencil})).
We consider the base change of $\psi : R \longrightarrow {\bf P}^1$
by $s = t^2$. Then we get an elliptic surface with 12 rational
double points of type $A_1$ defined by
\begin{equation}\label{pencil2}
x_1^2x_2 + x_1x_2^2 + x_0^3 + t^2x_0(x_1^2 + x_1x_2 + x_2^2) = 0.
\end{equation}
We consider the relatively minimal model of this elliptic surface (\ref{pencil2}):
\begin{equation}\label{pencil3}
f : Y \longrightarrow {\bf P}^1.
\end{equation}
From $Y$ to $R$, there exists a generically surjective 
purely inseparable rational map. Therefore, from $R^{(\frac{1}{2})}$
to $Y$, there also exists a generically surjective 
purely inseparable rational map. Since $R^{(\frac{1}{2})}$ is 
birationally isomorphic to ${\bf P}^2$,
we see that $Y$ is unirational. Hence, $Y$
is supersingular, i.e. the Picard number $\rho (Y)$ is equal 
to the second Betti number $b_{2}(Y)$ (cf. Shioda \cite{S}).

Now, we take an affine open set defined by $x_2 \neq 0$.
Then, on the affine open set this surface is defined by
$$
   y^2 + y + x^3 + t^2x(y^2 + y + 1) = 0.
$$
Considering the change of coordinates
$$
\begin{array}{l}
      v = (1 + t^3)\{(1 + t^2x)y + tx\}/t^6\\
      u = (1 + t^3)x/t^4
\end{array}
$$
we get a surface defined by
$$
v^2 + uv + t^2(t^4 + t)v + u^3 + (t^3 + 1)u^2 + t^2(t^4 + t)u = 0
$$
The discriminant of this elliptic surface is given by $\Delta(t) = t^6(t^3 + 1)^6$
(cf. Tate \cite{T}).
Therefore, we have $c_{2}(Y) = \sum_{t\in {\bf P}^1} {\rm ord}(\Delta(t)) = 24$, and
we conclude that $Y$ is a supersingular $K3$ surface (also see Dolgachev-Kondo \cite{DK}
and Katsura-Kondo \cite{KK}). We see there exist 4 singular fibers
of type $I_{6}$. These singular fibers exist over the points given by
$t = 1, \omega, \omega^2, \infty$.

For $f: Y \longrightarrow {\bf P}^1$,
there exist three exceptional curves derived from the resolution 
of the surface $(\ref{pencil2})$ on each singular fiber.
We denote them by $ E_{ij} (i = 1,\omega, \omega^2,  \infty; j = 1, 3, 5)$. 
We denote by $E_{ij} (i = 1,\omega, \omega^2,  \infty; j = 2, 4, 6)$ the rest of
components of singular fibers of $f : Y \longrightarrow {\bf P}^1$.
Here, $E_{i1}, E_{i2}, E_{i3}, E_{i4},  E_{i5},  E_{i6}$ are components 
of the singular fiber over $t = i$ $(i = 1,\omega, \omega^2,  \infty)$.
We have $E_{ij}^2 = -2$. Curves $E_{ij}$ and $E_{ij'}$ intersect each other transeversely 
if and only if $\mid j - j' ~({\rm mod ~6})\mid = 1$, and for other $j, j'$ we have
$\langle E_{ij}, E_{ij'}\rangle = 0$.

Now, we consider a rational vector field
$$
 D' = (t - 1)(t - a)(t - b)\frac{\partial}{\partial t} + (1 + t^2x)\frac{\partial}{\partial x}
\quad \mbox{with}~a + b = ab~\mbox{and}~a^3 \neq 1.
$$

\begin{lemma}\label{key}
Assume $a + b = ab$, $a^3 \neq 1$. Then,
\begin{itemize}
\item[$({\rm i})$] $D'^2 = t^2 D'$, namely, $D'$ is $2$-closed. 
\item[$({\rm ii})$] 
On the surface $Y$, the divisorial part of $D'$ is given by
$$
\begin{array}{rl}
(D') & = E_{11} + E_{13} + E_{15} -  E_{\omega1} - E_{\omega3} -  E_{\omega5}\\
 & \quad -  E_{\omega^21} - E_{\omega^23} -  E_{\omega^25} 
-  E_{\infty 2} - E_{\infty 4} -  E_{\infty 6} - F_{\infty},
\end{array}
$$
where $F_{\infty}$ is the fiber over the point given by $t = \infty$.
\item[$({\rm iii})$] The integral curves with respect to
$D$ are the following$:$ 
the smooth fibers over $t=a, b$ $($in case $a=b=0$, the smooth fiber over $t=0 )$ and
$$E_{12}, E_{14}, E_{16}, E_{\omega1}, E_{\omega3},  E_{\omega5}, E_{\omega^21}, E_{\omega^23}, E_{\omega^25}, 
 E_{\infty 2}, E_{\infty 4}, E_{\infty 6}.$$
\end{itemize}
\end{lemma}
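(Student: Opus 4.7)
For part (i), I would verify $D'^2=t^2 D'$ by computing the action on the coordinate functions $t,x$. Writing $D'=f(t)\partial_t+g(x,t)\partial_x$ with $f=(t-1)(t-a)(t-b)$ and $g=1+t^2x$, one has $D'^2(t)=f\cdot\partial_t f$ and $D'^2(x)=f\cdot\partial_t g+g\cdot\partial_x g$. In characteristic $2$, $\partial_t g=0$ and $\partial_x g=t^2$, giving $D'^2(x)=t^2 g=t^2 D'(x)$. For the $t$-derivative, expanding $f$ and using the hypothesis $a+b=ab$ forces $\partial_t f=t^2+(a+b+ab)=t^2$ in characteristic $2$, so $D'^2(t)=t^2 f=t^2 D'(t)$.

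For part (ii), the starting observation is that on the affine chart $(x,t)$ the two components of $D'$ are coprime polynomials, so the divisorial part is trivial there and all contributions to $(D')$ must come from the fiber $F_\infty$ and from the twelve exceptional $\bbP^1$'s of the resolution $Y\to Y'$ of the $A_1$ singularities. I would then switch to the chart $(x,u)$ with $u=1/t$ and rewrite
\[
D' \;=\; u^{-2}\bigl[\,u(1-u)(1-au)(1-bu)\partial_u + (u^2+x)\partial_x\,\bigr],
\]
whose bracket has coprime coefficients; this yields a pole of order $2$ along $\{u=0\}$ on the singular model $Y'$, which after resolving the three $A_1$ points on the fiber over $t=\infty$ distributes as the asserted $-F_\infty-(E_{\infty 2}+E_{\infty 4}+E_{\infty 6})$. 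Near each of the nine other $A_1$ points over $t=1,\omega,\omega^2$, I would introduce local coordinates modeling the $A_1$ singularity by $\xi\eta=(t-i)^2$, blow it up, and evaluate $D'$ in each resulting chart to determine its order along each new exceptional $\bbP^1$; the vanishing of $f$ at $t=1$ versus its non-vanishing at $t=\omega,\omega^2$ accounts for the opposite signs in the formula. As a consistency check, the claimed divisor satisfies $(D')^2=-24$, matching formula~(\ref{euler}) with $c_2(Y)=24$, $K_Y\sim 0$, and $\deg\langle D'\rangle=0$; this simultaneously confirms that $D'$ is divisorial, so that $Y^{D'}$ is nonsingular.

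For part (iii), integrality of a curve is the condition that the reduced vector field $G\partial_v+H\partial_w$, obtained after factoring out the divisorial part of $D'$, be tangent to the curve, a property independent of whether the curve itself appears in $(D')$. On the chart $(x,t)$ the reduced $D'$ equals $D'$ itself, and tangency to a smooth fiber $\{t=c\}$ reduces to $(c-1)(c-a)(c-b)=0$; since the fiber over $t=1$ is singular, this selects exactly the fibers over $t=a,b$ (or the single fiber over $t=0$ when $a=b=0$), and the same argument shows the three strict transforms $E_{12},E_{14},E_{16}$ are integral. For the remaining nine candidates, the tangency check is performed in the local resolution coordinates from part (ii). The main technical obstacle, both for (ii) and for (iii), is precisely this bookkeeping near the twelve $A_1$ points: tracking how the rational expression for $D'$ reorganizes after each blow-up, and verifying that the apparent parity switch between odd (exceptional) and even (strict transform) components is consistent with the stated lists across the four singular fibers.
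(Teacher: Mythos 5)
Your proposal is correct and follows essentially the same route as the paper: part (i) by direct evaluation of $D'^2$ on $t$ and $x$ (with $a+b=ab$ forcing $\partial_t f=t^2$ in characteristic $2$), and parts (ii) and (iii) by rewriting $D'$ in the chart at $t=\infty$ and in local blow-up coordinates at the twelve $A_1$ points. The paper likewise carries out only one representative local computation (at $(t,x,y)=(1,1,0)$, in the chart $t+1=TU$, $x+1=U$, $y=VU$, finding $(D')=C$ there with the exceptional curve non-integral and the strict transform integral) and declares the remaining charts similar, so the bookkeeping you defer is exactly what the paper also leaves to the reader; your consistency check of $(D')^2=-24$ against the $c_2$ formula is a reasonable addition.
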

\begin{proof}
These results follow from direct calculation. For example, to prove
(ii) and (iii), we consider a local chart of
the blowing-up at the point $(t, x, y)$ $= (1, 1, 0)$:
$$
  t + 1 = TU, ~x + 1 = U,~ y = VU
$$
with the new coordinates $T, U, V$.
Then, the exceptional curve $C$ is defined by $U = 0$ and
an irreducible component $C'$ of the fiber is given by $T = 0$ on the local chart.
We can show that the surface is nonsingular along $C$. 
It is easy to see that $T, U$ give
local coordinates on a neiborhood of $C$ in $Y$. Since 
$$
        \frac{\partial}{\partial t} =  \frac{1}{U}\frac{\partial}{\partial T},~ 
     \frac{\partial}{\partial x} =  \frac{\partial}{\partial U} + 
\frac{T}{U}\frac{\partial}{\partial T},
$$
on the local chart we have
$$
  D' = U\{(T^3 + (a + b)T^2)\frac{\partial}{\partial T} + (T^2U^2 + T^2U + 1)\frac{\partial}{\partial U}\}.
$$
Therefore, on the local chart we have the divisorial part $(D') = C$ and we see that
$C$ is not integral and $C'$ is integral with respect to the vector field $D'$.
On the other local charts for the blowing-ups, the calculation is similar.
\end{proof}

We set $D = \frac{1}{t - 1}D'$. Then, $D$ is also 2-closed, and we have
\begin{equation}\label{divisorial}
\begin{array}{rl}
   (D) &=  - (E_{12} + E_{14} + E_{16} +  E_{\omega1} + E_{\omega3} +  E_{\omega5}\\
 & \quad +  E_{\omega^21} + E_{\omega^23} +  E_{\omega^25} 
+  E_{\infty 2} + E_{\infty 4} +  E_{\infty 6}).
\end{array}
\end{equation}

\begin{lemma}
$Y^{D}$ is nonsingular.
\end{lemma}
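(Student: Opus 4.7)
The plan is to invoke the criterion of Rudakov--Shafarevich recalled in Section~\ref{sec2}: since $D$ is $2$-closed (being a rational multiple of the $2$-closed vector field $D'$ of Lemma~\ref{key}(i)), the quotient $Y^{D}$ is nonsingular if and only if the isolated zero cycle $\langle D \rangle$ vanishes. Rather than checking the absence of isolated singular points chart by chart, I would deduce this numerically from the Euler number formula~(\ref{euler}), using the explicit divisor~(\ref{divisorial}).

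First, since $Y$ is a K3 surface, we have $K_{Y} \sim 0$ and $c_{2}(Y) = 24$. Next, I would compute $(D)^{2}$ from~(\ref{divisorial}): the divisorial part $(D)$ is the negative of a sum of twelve $(-2)$-curves $E_{ij}$, three on each of the four singular fibers $F_{1}, F_{\omega}, F_{\omega^{2}}, F_{\infty}$ of type $I_{6}$. Within a single such fiber, the three components appearing in $(D)$ have fiber-indices that pairwise differ by $2 \pmod{6}$, so they are pairwise non-adjacent in the hexagonal cycle and therefore disjoint; components supported on different singular fibers clearly do not meet. Consequently
$$
(D)^{2} \;=\; 12 \cdot (-2) \;=\; -24.
$$

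Substituting into~(\ref{euler}) and using $K_{Y} \sim 0$ gives
$$
\deg \langle D \rangle \;=\; c_{2}(Y) + \langle K_{Y}, (D) \rangle + (D)^{2} \;=\; 24 + 0 + (-24) \;=\; 0.
$$
Since $\langle D \rangle$ is by definition an effective zero-cycle supported on the isolated singular points of $D$, degree zero forces it to be the empty cycle. Hence $D$ is divisorial, and the Rudakov--Shafarevich criterion yields that $Y^{D}$ is nonsingular.

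The only potentially delicate step is the intersection computation: one must know that $E_{ij}^{2} = -2$ and that the three components of $(D)$ on each $I_{6}$ fiber are pairwise non-adjacent. Both assertions are immediate from the description of the singular fibers and the intersection data $\langle E_{ij}, E_{ij'} \rangle = 1$ iff $|j - j'| \equiv 1 \pmod{6}$ recorded at the end of Section~\ref{s4}.
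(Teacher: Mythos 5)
Your proof is correct and follows essentially the same route as the paper: both compute $(D)^2=-24$ from the twelve pairwise disjoint $(-2)$-curves in (\ref{divisorial}), plug into the Euler number formula (\ref{euler}) with $K_Y\sim 0$ and $c_2(Y)=24$ to get $\deg\langle D\rangle=0$, and conclude that $D$ is divisorial so $Y^D$ is nonsingular. Your explicit check that the three components on each $I_6$ fiber are non-adjacent (indices differing by $2 \bmod 6$) just makes precise a step the paper leaves implicit.
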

 \begin{proof}
 We have
$$
\begin{array}{rl}
   (D)^2 &=  E_{12}^2 + E_{14}^2 + E_{16}^2 +  E_{\omega1}^2 + E_{\omega3}^2 +  E_{\omega5}^2\\
 & \quad +  E_{\omega^21}^2 + E_{\omega^23}^2 +  E_{\omega^25}^2 
+  E_{\infty 2}^2 + E_{\infty 4}^2 +  E_{\infty 6}^2\\
             & = (-2) \times 12 = -24
\end{array}
$$
Since $Y$ is a $K3$ surface, we have $c_{2}(Y) = 24$.
Therefore, by the equation (\ref{euler}), we have 
$$
24 = c_{2}(Y) = \deg \langle D\rangle - \langle K_{Y}, (D)\rangle - (D)^2 = \deg \langle D\rangle  + 24.
$$
Therefore, we have $\deg  \langle D\rangle = 0$ and $D$ is divisorial.
Hence, $Y^{D}$ is nonsingular.
By direct calculation we can also show that $D$ is divisorial.
\end{proof}

By the result on the canonical divisor formula of Rudakov and Shafarevich (see the equation (\ref{canonical})),
we have
$$
        K_{Y} = \pi^{*} K_{Y^D} + (D).
$$
\begin{lemma}
Let $C$ be an irreducible curve contained in the support of the divisor $(D)$,
and set $C' = \pi (C)$. Then, $C'$ is an exceptional curve of the first kind.
\end{lemma}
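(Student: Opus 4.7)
My plan is to combine the integrality statement in Lemma~\ref{key}(iii), Proposition~\ref{insep}(i), and the canonical divisor formula~(\ref{canonical}) on the smooth quotient $Y^{D}$. First I would inspect~(\ref{divisorial}) and observe that the twelve irreducible components appearing in the support of $(D)$,
$$
E_{12},\;E_{14},\;E_{16},\;E_{\omega 1},\;E_{\omega 3},\;E_{\omega 5},\;E_{\omega^{2} 1},\;E_{\omega^{2} 3},\;E_{\omega^{2} 5},\;E_{\infty 2},\;E_{\infty 4},\;E_{\infty 6},
$$
are precisely the twelve components listed in Lemma~\ref{key}(iii) as integral for $D'$. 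Since $D$ and $D'$ differ by the nonzero rational factor $1/(t-1)$, they define the same foliation and hence share the same integral curves, so every such $C$ is also integral for $D$. Proposition~\ref{insep}(i) with $p=2$ then gives $C^{2} = 2(C')^{2}$, and since $C$ is a $(-2)$-curve this forces $(C')^{2} = -1$.

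Next I would compute $K_{Y^{D}}\cdot C'$. Because $Y$ is a $K3$ surface we have $K_{Y}\sim 0$, so the canonical divisor formula~(\ref{canonical}) reads $\pi^{*}K_{Y^{D}}\sim -(D)$. Intersecting with $C$ and applying the projection formula,
$$
-(D)\cdot C \;=\; \pi^{*}K_{Y^{D}}\cdot C \;=\; K_{Y^{D}}\cdot\pi_{*}C \;=\; 2\,K_{Y^{D}}\cdot C',
$$
where $\pi_{*}C = 2C'$ because $\pi|_{C}\colon C\to C'$ is purely inseparable of degree $2$ (equivalently, $\pi^{*}C' = C$ from Proposition~\ref{insep}(i) combined with $\pi_{*}\pi^{*} = 2\,\mathrm{id}$). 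A direct check in~(\ref{divisorial}) shows that the three components appearing in $(D)$ within each $I_{6}$ fiber are alternate vertices of the hexagon and hence pairwise disjoint, so the only contribution to $(D)\cdot C$ is $(-C)\cdot C = 2$. Therefore $K_{Y^{D}}\cdot C' = -1$.

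Since $Y^{D}$ is smooth by the preceding lemma, adjunction on $Y^{D}$ now gives
$$
2p_{a}(C') - 2 \;=\; (C')^{2} + K_{Y^{D}}\cdot C' \;=\; -1 - 1 \;=\; -2,
$$
so $p_{a}(C') = 0$. Hence $C'$ is a smooth rational curve with $(C')^{2} = -1$, i.e., an exceptional curve of the first kind. The step I expect to require the most care is the projection-formula bookkeeping---specifically, confirming that $\pi|_{C}$ is genuinely inseparable of degree $2$ rather than birational---but this is forced once we know $\pi^{*}C' = C$ and $(\pi^{*}C')^{2} = 2(C')^{2}$, both supplied by Proposition~\ref{insep}(i).
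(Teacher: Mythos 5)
Your proposal is correct and follows essentially the same route as the paper: integrality plus Proposition~\ref{insep}(i) gives $(C')^{2}=-1$, the canonical divisor formula with $K_{Y}\sim 0$ gives $\langle K_{Y^D},C'\rangle=-1$ (the paper computes $2\langle K_{Y^D},C'\rangle=\langle \pi^{*}K_{Y^D},\pi^{*}C'\rangle=\langle K_{Y}-(D),C\rangle=C^{2}$, which is the same bookkeeping as your projection-formula version), and adjunction finishes. Your explicit check that the components of $(D)$ in each $I_{6}$ fiber are pairwise disjoint is a detail the paper leaves implicit, but nothing differs in substance.
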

\begin{proof}
Since $C$ is integral with respect to $D$ (Lemma \ref{key}), 
we have $C = \pi^{-1}(C')$ (Proposition \ref{insep}).
Since $- 2 = C^2 = (\pi^{-1}(C'))^2 =2C'^2$, we have $C'^2 = -1$.
Since $Y$ is a $K3$ surface, $K_Y$ is 
linearly equivalent to zero.
Therefore, we have
$$
\begin{array}{rl}
2\langle K_{Y^D}, C'\rangle & =  \langle \pi^{*}K_{Y^D}, \pi^{*}(C')\rangle\\
    & =  \langle K_Y - (D), C\rangle = C^2 = -2.
\end{array}
$$
Hence we have $\langle K_{Y^D}, C'\rangle  = -1$.  Therefore,
the virtual genus of $C'$ is equal to $(\langle K_{Y^D}, C'\rangle + C'^2)/2 + 1 = 0$.
Hence, $C'$ is an exceptional curve of the first kind.
\end{proof}

We denote these 12 exceptional curves on $Y^{D}$ by $E'_{i}$ ($i = 1, 2, \ldots, 12$),
which are the images of irreducible components of $-(D)$ by $\pi$.
Now we have the following commutative diagram:
$$
\begin{array}{ccc}\label{maps}
       \quad    Y^{D} & \stackrel{\pi}{\longleftarrow} & Y \\
                \varphi \downarrow &    &   \downarrow f \\
      \quad        X     &        &   {\bf P}^1 \\
           g \downarrow & \quad   \swarrow_{F}&  \\
    \quad        {\bf P}^1 &   &
\end{array}
$$
Here, $\varphi$ is  the blowing-downs of $E'_{i}$ ($i = 1, 2, \ldots, 12$) and $F$ is the Frobenius base change.
Then, we have
$$
         K_{Y^D} = \varphi^{*}(K_{X}) + \sum_{i = 1}^{12}E'_{i}.
$$
\begin{lemma} 
The canonical divisor $K_{X}$ of $X$ is numerically equivalent to $0$.
\end{lemma}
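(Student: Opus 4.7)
The plan is to chain the two canonical divisor formulas produced just above the statement. From the Rudakov--Shafarevich formula applied to $\pi : Y \to Y^D$ we have $K_Y \sim \pi^* K_{Y^D} + (D)$, and from the definition of $\varphi$ as the blow-down of $E'_1,\ldots,E'_{12}$ we have $K_{Y^D} = \varphi^* K_X + \sum_{i=1}^{12} E'_i$. Substituting the second into the first and using $K_Y \sim 0$ (since $Y$ is a $K3$ surface) gives
$$ 0 \sim \pi^* \varphi^* K_X + \sum_{i=1}^{12} \pi^* E'_i + (D). $$

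The key step is to identify $\sum \pi^* E'_i$ with $-(D)$. Each $E'_i$ is by construction the image under $\pi$ of one of the twelve components of $-(D)$, all of which are integral with respect to $D$ by Lemma \ref{key}(iii). Hence Proposition \ref{insep}(i) applies: writing $E_i = \pi^{-1}(E'_i)$, we have $E_i^2 = 2(E'_i)^2$. If $\pi^* E'_i = m E_i$ as divisors, the projection formula gives $m^2 E_i^2 = 2(E'_i)^2 = E_i^2$, forcing $m = 1$. Therefore $\sum \pi^* E'_i = \sum E_i = -(D)$ by formula (\ref{divisorial}), and the equation above collapses to $\pi^* \varphi^* K_X \sim 0$.

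To extract numerical triviality of $K_X$ from this, I would pair against an arbitrary divisor $M$ on $X$. Since $\varphi$ is a birational morphism between smooth surfaces and $\pi$ is a finite purely inseparable morphism of degree $2$, the projection formula yields
$$ 0 = (\pi^* \varphi^* K_X) \cdot (\pi^* \varphi^* M) = 2\, K_X \cdot M. $$
As $M$ is arbitrary, $K_X \equiv 0$ numerically, which is exactly what the lemma asserts.

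The only delicate point is the verification that $\pi^* E'_i = E_i$ with multiplicity one; this is where the integrality hypothesis from Lemma \ref{key}(iii) is indispensable, since without it Proposition \ref{insep}(ii) would apply instead and produce a factor of $2$ that would spoil the cancellation. Everything else is a formal manipulation of divisor classes.
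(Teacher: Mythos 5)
Your proof is correct and follows essentially the same route as the paper: both substitute the blow-down formula for $K_{Y^D}$ into the Rudakov--Shafarevich formula, use integrality of the twelve components of $(D)$ together with Proposition \ref{insep}(i) to identify $\pi^*\bigl(\sum_i E'_i\bigr)$ with $-(D)$, and conclude $\pi^*\varphi^*K_X\sim 0$. You merely spell out two steps the paper leaves implicit (the multiplicity-one check for $\pi^*E'_i$ and the final projection-formula argument extracting $K_X\equiv 0$), which is fine.
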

\begin{proof}
By Lemma \ref{key}, all irreducible curves which appear
in the divisor $(D)$ are integral with respect to the vector field $D$.
For an irreducible component $C$ of $(D)$, we set $C' =\pi (C)$.
Then, we have $C = \pi^{-1}(C')$ (Proposition \ref{insep}). Therefore, we have
$$
       (D) = - \pi^{*}(\sum_{i = 1}^{12}E'_{i}).
$$
Since $Y$ is a $K3$ surface,
$$
\begin{array}{rl}
     0 \sim K_{Y} & = \pi^{*}K_{Y^D} + (D) \\
   &  = \pi^{*}( \varphi^{*}(K_{X}) + \sum_{i = 1}^{12}E'_{i})  + (D) = \pi^{*}(\varphi^{*}(K_{X}))
\end{array}
$$
Therefore, $K_{X}$ is numerically equivalent to zero.
\end{proof}

\begin{lemma} 
$b_{2}(X) = 10$ and $c_{2}(X) = 12$.
\end{lemma}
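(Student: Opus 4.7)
The plan is to transport topological invariants along the commutative diagram $Y \stackrel{\pi}{\longrightarrow} Y^D \stackrel{\varphi}{\longrightarrow} X$ already set up in the paper.

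First, I would use that $\pi$ is purely inseparable of degree $2$, hence a universal homeomorphism; since $Y^D$ is nonsingular by the previous lemma, $\pi$ induces isomorphisms on $\ell$-adic \'etale cohomology for every prime $\ell \neq 2$. Therefore $b_i(Y^D) = b_i(Y)$ for all $i$, and in particular, using that $Y$ is a $K3$ surface, $b_2(Y^D) = 22$ and $c_2(Y^D) = 24$.

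Second, by the previous lemma the map $\varphi$ contracts the twelve exceptional curves of the first kind $E'_1, \ldots, E'_{12}$. These are pairwise disjoint, since they are the images under $\pi$ of the twelve components
$$E_{12}, E_{14}, E_{16}, E_{\omega 1}, E_{\omega 3}, E_{\omega 5}, E_{\omega^2 1}, E_{\omega^2 3}, E_{\omega^2 5}, E_{\infty 2}, E_{\infty 4}, E_{\infty 6},$$
which are pairwise disjoint on $Y$ (the incidence rule $\langle E_{ij}, E_{ij'}\rangle \neq 0$ iff $|j - j'| \equiv 1 \pmod 6$ makes the three chosen components within each $I_6$ fiber mutually non-adjacent, and distinct fibers are disjoint), and $\pi$ is a bijection on points. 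Each contraction of a $(-1)$-curve decreases both $b_2$ and $c_2$ by $1$ (topologically, a $\mathbf{P}^1$ is replaced by a point, contributing $2-1 = 1$ to the Euler characteristic, while the class of the exceptional curve drops out of $H^2$). Iterating twelve times gives $b_2(X) = 22 - 12 = 10$ and $c_2(X) = 24 - 12 = 12$.

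The only step requiring any thought is the passage from $Y$ to $Y^D$: one must invoke the fact that purely inseparable morphisms are universal homeomorphisms in order to identify the Betti numbers and the topological Euler characteristic across $\pi$. After that, the claim reduces to blow-down arithmetic, which is entirely formal. As a sanity check one could instead deduce $c_2(X) = 12$ from $b_2(X) = 10$ via Noether's formula $12\,\chi(\mathcal{O}_X) = K_X^2 + c_2(X)$ using that $K_X \equiv 0$ numerically from the preceding lemma, but the direct topological route above avoids needing $\chi(\mathcal{O}_X)$ separately.
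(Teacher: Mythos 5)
Your proposal is correct and follows essentially the same route as the paper: identify the Betti numbers of $Y^D$ with those of the $K3$ surface $Y$ via the purely inseparable (universal homeomorphism) map $\pi$, then account for the contraction of the twelve disjoint exceptional curves of the first kind under $\varphi$. The paper computes $c_2(X)$ as the alternating sum of the Betti numbers of $X$ rather than by subtracting $12$ from $c_2(Y^D)=24$, but this is the same computation.
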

\begin{proof}
Since $\pi : {Y} \longrightarrow {Y}^{D}$ is finite and
purely inseparable, the \'etale cohomology of $Y$ is isomorphic to 
the \'etale cohomology of $Y^{D}$. Therefore, we have
$b_{1}(Y^{D}) = b_{1}(Y) = 0$, 
$b_{3}(Y^{D})= b_{3}(Y) = 0$ and $b_{2}(Y^{D}) 
= b_{2}(Y) = 22$. Since $\varphi$ is blowing-downs
of 12 exceptional curves of the first kind, we see
$b_{0}(X) =b_{4}(X) = 1$, $b_{1}(X) =b_{3}(X) = 0$ and $b_{2}(X) = 10$.
Therefore, we have 
$$
c_{2}(X) = b_{0}(X) - b_{1}(X) + b_{2}(X) -b_{3}(X) + b_{4}(X) = 12.
$$
\end{proof}

\begin{theorem}\label{main}
Under the notation above, the following statements hold.
\begin{itemize}
\item[$({\rm i})$] $X$ is a supersingular Enriques surface
if $a = b = 0$. 
\item[$({\rm ii})$] $X$ is a classical Enriques surface if $a + b = ab$ and 
$a \notin {\bf F}_{4}$.
\end{itemize}
\end{theorem}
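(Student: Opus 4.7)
The plan is to combine the preceding lemmas, which already identify $X$ as an Enriques surface, with an analysis of the multiple fibres of the induced elliptic fibration $g\colon X\to \bbP^1$. The earlier lemmas give $K_X$ numerically trivial and $b_2(X)=10$, and composing $\pi$ with $\varphi$ produces a generically surjective rational map from the supersingular $K3$ surface $Y$ onto $X$; by Lemma \ref{lm;singular} therefore $X$ cannot be singular. It thus suffices to show that $g$ has two multiple fibres in case (ii) and only one in case (i), since by Bombieri--Mumford a classical Enriques surface in characteristic $2$ has exactly two multiple fibres while a supersingular (or singular) one has exactly one.

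To locate the multiple fibres I would push the $t$-component of $D$ down to the vector field $\bar D=(t-a)(t-b)\tfrac{\partial}{\partial t}$ on $\bbP^1_t$ and use the commutativity $g\circ\varphi\circ\pi=F\circ f$ from the diagram, where $F\colon \bbP^1_t\to\bbP^1_s$ is the Frobenius $s=t^2$. For $s_0\in\bbP^1_s$ with unique preimage $t_0$, pulling $[s_0]$ back through the two sides gives $\pi^*\varphi^*g^*([s_0])=f^*(2[t_0])=2F_{t_0}$. If $t_0$ is a zero of $\bar D$, the fibre $F_{t_0}$ is smooth and integral with respect to $D$ by Lemma \ref{key}(iii), so Proposition \ref{insep}(i) gives $\pi^*(F'_{t_0})=F_{t_0}$, whence $\varphi^*g^*([s_0])=2F'_{t_0}$. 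If $t_0$ is not a zero of $\bar D$ (in particular for $t_0\in\{1,\omega,\omega^2,\infty\}$, where $(1-a)(1-b)=1$ using $a+b=ab$), the non-integral irreducible components $C$ of $F_{t_0}$ satisfy $\pi^*(\pi(C))=2C$ by Proposition \ref{insep}(ii), while the integral components are precisely those mapping to the $(-1)$-curves $E'_i$ blown down by $\varphi$; a short check using $\varphi_*E'_i=0$ then shows that the resulting fibre of $g$ is reduced. Consequently the multiple fibres of $g$ are exactly the fibres over the points $s_0=t_0^2$ with $\bar D(t_0)=0$, each of multiplicity $2$.

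To conclude I would count zeros of $\bar D$. In case (i) $\bar D=t^2\tfrac{\partial}{\partial t}$ vanishes only at $t=0$ (in the chart $u=1/t$ it becomes $-\tfrac{\partial}{\partial u}$, non-vanishing at $t=\infty$), so $g$ has exactly one multiple fibre and $X$ is supersingular. In case (ii), $a\notin\bbF_4$ combined with $a^3\ne1$ forces $a,b\in k^\times$ and $a\ne b$ (since $a=b$ with $a+b=ab$ forces $a=0$ in characteristic $2$), so $\bar D$ has two distinct simple zeros at $t=a,b$ and $g$ has two multiple fibres over the distinct values $s=a^2,b^2$, making $X$ classical. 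The main obstacle I anticipate is the bookkeeping in the middle paragraph: one must carefully combine the two cases of Proposition \ref{insep} with the blow-down $\varphi$ on the four $I_6$ singular fibres of $f$ to verify that these contribute reduced (not multiple) fibres of $g$, so that the multiple-fibre locus is pinned down exactly by the zero locus of $\bar D$.
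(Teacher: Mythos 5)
Your argument is correct, but it concludes differently from the paper. Both proofs rest on the same geometric input from Lemma \ref{key}: the vector field $D$ is tangent to the fibres of $f$ exactly over the zeros of $(t-a)(t-b)$, so the multiple fibres of $g$ lie over $t=0$ in case (i) and over $t=a,b$ in case (ii); your divisor-pullback computation through $\pi^*\varphi^*g^*=f^*F^*$ combined with Proposition \ref{insep} is a clean way to make precise the paper's bare assertion that these are the \emph{only} multiple fibres, including the reducedness of the images of the four $I_6$ fibres. Where you diverge is the final step: you invoke the classification fact recalled in Section \ref{s4} (a classical Enriques surface has exactly two multiple fibres on any elliptic fibration, a singular or supersingular one exactly one) together with Lemma \ref{lm;singular} to exclude the singular case. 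The paper instead pins down $K_X$ directly: in case (i) it shows the unique multiple fibre $2E_0$ is wild (the supersingular elliptic curve $E_0$ has no $2$-torsion, so $\mathcal{O}(E_0)|_{E_0}$ is trivial), and the canonical bundle formula then forces $t=1$, $m=0$, hence $K_X\sim 0$; in case (ii) it rules out wildness of $E_a$ and $E_b$ by the same formula and obtains $K_X=g^*(K_{\mathbf{P}^1}-L)+E_a+E_b$ with $2K_X\sim 0$ but $K_X\not\sim 0$. Your route is shorter and bypasses the wild/tame analysis entirely, at the cost of leaning on the multiple-fibre count as a black box (whose proof in Bombieri--Mumford is essentially the canonical-bundle-formula argument the paper carries out by hand); the paper's route additionally identifies the multiple fibres as wild resp.\ tame and verifies the defining property of each class of Enriques surface directly.
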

\begin{proof}
(i) Assume $a = b = 0$. Then, the vector field $D$ is a fiber direction
only on the fiber over the point $P_{0}$ defined by $t = 0$ (Lemma \ref{key}).
Since $f^{-1}(P_{0})$ is a supersingular elliptic curve, the reduced part of
the fiber $g^{-1}(F(P_{0}))$ is also a supersingular elliptic curve,
and we have only one multiple fiber on the elliptic surface
$g : X \longrightarrow {\bf P}^{1}$.
Let $g^{-1}(F(P_{0})) = 2E_{0}$ be the multiple fiber. 
Then, since $E_{0}$ is a supersingular elliptic curve, 
it has no 2-torsion points. Therefore,
${\rm Pic}^{0}(E)$ has also no 2-torsion points. Since the normal bundle 
${\calO}(E_{0})\vert_{E_{0}} \in {\rm Pic}^{0}(E)$ and $({\calO}(E_{0})\vert_{E_{0}})^{\otimes 2}$
is a trivial invertible sheaf, ${\calO}(E_{0})\vert_{E_{0}}$ itself is trivial.
Therefore, $2E_{0}$ is a wild fiber (See Bombieri-Mumford \cite{BM1}, 
and Katsura-Ueno \cite{KU}).
The canonical divisor formula is given by
$$
\begin{array}{l}
     K_{X} = g^{*}(K_{{\bf P}^{1}} - L) + mE_{0}  \quad  \mbox{with an integer}
~m~ (0\leq m \leq 1), \\
      - \deg L = \chi(X, {\calO}_{X}) + t.
\end{array}
$$
Here, $t$ is the rank of the torsion part of ${\rm R}^{1}g_{*}{\calO}_{X}$.
There exist wild fibers if and only if $t \geq 1$ (cf. Bombieri-Mumford \cite{BM1}).
Since $2E_{0}$ is wild, we see $t \geq 1$. Since $K_{X}$ is numerically trivial
and $\deg K_{{\bf P}^{1}} = -2$,
considering the intersection of $K_{X}$ with a hyperplane section, we have
$$
    0 = (-2 + 1 + t) + \frac{m}{2}.
$$
Since $t \geq 1$ and $m \geq 0$, we conclude that $t = 1$ and $m = 0$.
Therefore, we have $ K_{X} \sim 0$.  
Since the second Betti number $b_{2}(X) = 10$, $X$ is either
singular Enriques surface or supersingular Enriques surface.
On the other hand, since $Y$ is a supersingular $K3$ surface, 
$X$ is not a singular Enriques surface by Lemma \ref{lm;singular}.
Hence, we conclude that $X$ is a supersingular Enriques surface.

(ii) We assume $a + b = ab$ and  $a \notin {\bf F}_{4}$.
Then, the vector field $D$ is a fiber direction
only on two fibers over the point $P_{a}$ defined by $t = a$ and over the point $P_{b}$
defined by $t = b$ (Lemma \ref{key}).
Let $g^{-1}(F(P_{b})) = 2E_{b}$ and  $g^{-1}(F(P_{a})) = 2E_{a}$
be two multiple fibers. 
Then, the canonical divisor formula is given by
$$
\begin{array}{l}
     K_{X} = g^{*}(K_{{\bf P}^{1}} - L) + m_{a}E_{a} + m_{b}E_{b} \\
    \quad \quad  \mbox{with integers}
~m_a~ \mbox{and}~m_{b}~ (0\leq m_a, m_{b} \leq 1) \\
      - \deg L = \chi(X, {\calO}_{X}) + t.
\end{array}
$$
Here, $t$ is the rank of the torsion part of ${\rm R}^{1}g_{*}{\calO}_{X}$.

Suppose both $E_{a}$ and $E_{b}$ are wild. Then we have $t \geq 2$.
Therefore, we have $\deg (K_{{\bf P}^{1}} - L) \geq -2 + 1  + 2 = 1$.
Hence, $K_{X}$ is not numerically equivalent to zero, a contradiction.

Now, suppose only one of $E_{a}$ and $E_{b}$, say $E_{b}$, is wild.
Then, $K_{X} = g^{*}(K_{{\bf P}^{1}} - L) + E_{a} + m_{b}E_{b} \quad 
 \mbox{with an integer}~m_{b}~ (0\leq m_{b} \leq 1)$ and $t \geq 1$. 
Then, we have $\deg (K_{{\bf P}^{1}} - L) \geq -2 + 1 + 1= 0$. 
Therefore, we have $K_{X} \succ E_{a}$ and $K_{X}$ is not numerically 
equivalent to zero, a contradiction.

Therefore, both $E_{a}$ and $E_{b}$ are tame, and the canonical divisor is given by
$$
   K_{X} = g^{*}(K_{{\bf P}^{1}} - L) + E_{a} + E_{b}\quad 
\mbox{with}~\chi(X, {\calO}_{X})  = 1, t = 0.
$$
Therefore, $K_{X}$ is not linearly equivalent to zero and $2K_{X} \sim 0$.
Since $b_{2}(X) = 10$, we conclude that $X$ is a classical Enriques surface.
\end{proof}

\section{$30$ nodal curves}\label{s5}

We use the same notation in the previous sections.  
We call a nonsingular rational curve on a $K3$ or an Enriques surface a nodal curve.  In this section and the next we will show that there exist 30 nodal curves and 10 non-effective $(-2)$-divisors on $X$.

First we recall some results for the supersingular $K3$ surface $Y$ with Artin invariant $1$ in Dolgachev-Kondo \cite{DK}.
The N\'eron-Severi lattice $\NS(Y)$ is an even lattice of
signature $(1,21)$ isomorphic to $U \oplus D_{20}$.  
The $K3$ surface $Y$ is obtained as the minimal resolution of 
a purely inseparable double cover 
$$p : Y \to {\bf P}^2$$
of the
projective plane ${\bf P}^2$.  
The purely inseparable double cover of ${\bf P}^2$ has
$21$ ordinary nodes over $21$ ${\bf F}_4$-rational points
${\bf P}^2({\bf F}_4)$.  Thus we have $21$ disjoint nodal curves on $Y$ as exceptional divisors.  On the other hand the pullbacks of $21$ lines in ${\bf P}^2({\bf F}_4)$
form 21 disjoint nodal curves on $Y$. 
Therefore
$Y$ contains 42 nodal curves.
These curves form a $(21)_5$-configuration, that is, they are divided into two families $\calA$ and $\calB$ each of which consists of $21$ disjoint curves, and 
each curve in one family meets exactly $5$ curves in another family at one point transverselly. 
Recall that $Y$ has a structure of an elliptic fibration 
$$f : Y \to {\bf P}^1$$
with four singular fibers of type $I_6$ and 
$18$ sections (see (\ref{pencil3})).  The above 42 nodal curves
coincide with the set of 24 irreducible components of singular fibers and 18 sections of the fibration $f$.

The action of the projective transformation group $\PGL(3,{\bf F}_4)$ on the plane can be lifted to automorphisms of $Y$.  Also there exists an involution $\sigma$ of $Y$, called a switch, changing two families $\calA$ and $\calB$.
The semi-direct product $\PGL(3, {\bf F}_4) \cdot {\bf Z}/2{\bf Z}$ preserves the 42 nodal curves.  Here ${\bf Z}/2{\bf Z}$ is generated by $\sigma$.  Moreover there exist 168 involutions of $Y$ as follows.  A set of six points in ${\bf P}^2({\bf F}_4)$ is called general if any three points in the set are not collinear.  There
are 168 general sets of six points.  For each general set of six points, we associate the Cremonat transformation
of the plane which can be lifted to an involution of
$Y$.  We call this involution the Cremona transformation associated with a general set $I$ of six points and denote it by $Cr_I$.  The action of $Cr_I$ on
$\NS(Y)$ is the reflection associated with a
$(-4)$-vector 
\begin{equation}\label{168}
2\ell - (C_1 + \cdots + C_6)
\end{equation}
in $\NS(Y)$.  Here $\ell$ is the class of the pullback of a line in the projective plane by $p$ and $C_1,\ldots, C_6$ are exceptional curves over the six points in $I$.
It is known that the group $\Aut(Y)$ is generated by $\PGL(3, {\bf F}_4)$, $\sigma$ and 168 Cremonat transformations (Dolgachev-Kondo \cite{DK}).
 
Let $X$ be the Enriques surface given in Theorem \ref{main}.  It is known that 
the N\'eron-Severi lattice
modulo torsions, denoted by $\Num(X)$, is isomorphic to
$U\oplus E_8$ which is an even unimodular lattice of 
signature $(1,9)$ (see Cossec-Dolgachev \cite{CD}).
Consider the map
$$\tilde{\pi} = \varphi \circ \pi : Y \to X$$ 
where $\pi : Y \to Y^D$ and $\varphi : Y^D \to X$ are given in Section \ref{s4}. 
Then $\tilde{\pi}^*(\Num(X))$ is a primitive sublattice
in $\NS(Y)$ isomorphic to $U(2)\oplus E_8(2)$
because $\langle \tilde{\pi}^*D, \tilde{\pi}^*D'\rangle$ 
$= 2\langle D, D'\rangle$.
Denote by $E_1,\ldots, E_{12}$ the 12 disjoint integral nodal curves on $Y$ which are contracted under the map $\tilde{\pi}$ (In the equation (\ref{divisorial}) in Section \ref{s4}, we denote them by
$E_{12}$, $E_{14}$, $E_{16}$, $E_{\omega1}$, $E_{\omega3}$,  
$E_{\omega5}$, $E_{\omega^21}$, $E_{\omega^23}$, $E_{\omega^25}$, 
$E_{\infty 2}$, $E_{\infty 4}$, $E_{\infty 6}$).  
Note that these 12 curves consist of
6 curves in $\calA$ and 6 curves in $\calB$.
Let $A_1^{\oplus 12}$ be the sublattice in $\NS(Y)$ generated by
$E_1, \ldots, E_{12}$.  Obviously $A_1^{\oplus 12}$ is orthogonal to $\tilde{\pi}^*(\Num(X))$.

As mentioned above, there are 42 nodal curves on
$Y$.  Among them, 12 curves $E_1, \ldots, E_{12}$ are integral and contracted by $\tilde{\pi}$.  In the following we discuss the remaining 30 non-integral curves.  
Let $F$ be a remaining non integral nodal curve. Note that $F$ meets exactly 
two curves among $E_1, \ldots, E_{12}$ and 
the image $\pi(F)$ has the self-intersection number $-4$
by Proposition \ref{insep}.
The image $\tilde{\pi}(F)$ is a nodal curve.
Let $F'$ be an another remaining curve.  If $\langle F, F'\rangle =1$, then $\tilde{\pi}(F)$ meets $\tilde{\pi}(F')$ at one point with multiplicity 2.
Assume that $F$ belongs to
the family $\calA$. 
Recall that $F$ meets 5 curves in $\calB$.
Denote by $E, E', F_1, F_2, F_3$ the curves meeting with $F$ where $E, E'$ are integral, that is, they belong to $\{E_1, \ldots , E_{12}\}$.  Assume that $E$ meets $F, G_1,\ldots, G_4$ and $E'$ meets 
$F, G_1', \ldots, G_4'$. Obviously $G_1,\ldots, G_4, G_1',\ldots, G_4'$ belong to $\calA$. Then 
the image $\tilde{\pi}(F)$ meets three curves $\tilde{\pi}(F_i)$ $(i=1,2,3)$ with multiplicity 2 and meets 4 curves 
$\tilde{\pi}(G_i), 1\leq i\leq 4,$ (resp. $\tilde{\pi}(G_i'), 1\leq i\leq 4$)
at the point $\tilde{\pi}(E)$ (resp. $\tilde{\pi}(E')$).
We now get the following lemma.

\begin{lemma}
There exist $30$ nodal curves on $X$ which are the images
of the $30$ nodal curves not belonging to
$\{E_1,\ldots, E_{12}\}$.  
Let $\bar{\calA}$ and $\bar{\calB}$ be the families of nodal curves which are the images of curves in $\calA$ and $\calB$ respectively.  Each nodal curve in one family 
tangents three nodal curves in another family.
Each nodal curve $C$ 
in one family meets $8$ nodal curves in
the same family transversally.  Moreover $4$ curves in
these $8$ nodal curves meet at a point on $C$ and
the remaining $4$ curves meet at another point on $C$.
\end{lemma}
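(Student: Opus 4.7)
The plan is to track the $30$ non-integral nodal curves in the $(21)_5$-configuration on $Y$ through the composite $\tilde{\pi} = \varphi \circ \pi$, applying Proposition \ref{insep} and the projection formula at each step. Since the $12$ contracted curves split as $6$ in $\calA$ and $6$ in $\calB$, the $30$ remaining curves split as $15 + 15$. The mutual disjointness of the $E_i$ forces each integral curve to meet only non-integral ones in the opposite family, and the double count $6 \cdot 5 = 30 = 15 \cdot 2$ gives an average of $2$ integral neighbors per non-integral curve; I would upgrade this to a uniform statement by inspecting the explicit list of $E_{ij}$ from Section \ref{s4}, reading off the integral neighbors of each non-integral curve directly from the singular fibers it meets.

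With the incidence pattern established, $\bar{F}^2 = -2$ for a non-integral $F \in \calA$ follows from Proposition \ref{insep}(ii) ($\pi(F)^2 = -4$) combined with the pullback identity $\pi(F) \cdot \pi(E) = F \cdot E = 1$ for the two integral neighbors $E, E'$ of $F$: the blowdown $\varphi$ of the two $(-1)$-curves $\pi(E), \pi(E')$ raises the self-intersection by $+2$. The same pullback, applied to non-integral $F \in \calA$ and $F_i \in \calB$ with $F \cdot F_i = 1$, yields $\pi(F) \cdot \pi(F_i) = 2 \, F \cdot F_i = 2$; by inseparability of $\pi$ this intersection is concentrated at the single point $\pi(F \cap F_i)$, which is not a blown-down center, so $\bar{F}$ meets each of the three $\bar{F}_i$ tangentially at a single point on $X$.

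For intersections within one family, two disjoint non-integral curves in $\calA$ can only acquire an intersection on $X$ at the blown-down points $\tilde{\pi}(E_k)$. Each integral $E \in \calB$ meets exactly $5$ curves in $\calA$, necessarily non-integral, namely $F$ together with $G_1, \ldots, G_4$; their images all pass through $\tilde{\pi}(E)$, and since the preimages $F \cap E$ and $G_i \cap E$ are distinct points on $E$, each $\bar{G}_i$ meets $\bar{F}$ transversally once at $\tilde{\pi}(E)$. Repeating at $E'$ produces four further curves $G_1', \ldots, G_4'$ meeting $\bar{F}$ at $\tilde{\pi}(E')$. Uniformity from the first paragraph forces a bijection between non-integral curves in $\calA$ and unordered pairs of integral curves in $\calB$, under which the eight $G_i, G_j'$ correspond to the eight pairs meeting $\{E, E'\}$ in exactly one element; this guarantees the eight curves are pairwise distinct and different from $F$.

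The main obstacle is the uniform distribution used in the first and third paragraphs: the numerical double count only delivers an average of $2$ integral neighbors, and the uniform version (which implicitly recovers the duad-syntheme combinatorics of $\mathfrak{S}_6$ advertised in the introduction) requires either an explicit intersection calculation against the specific $E_{ij}$ of Section \ref{s4}, or a symmetry argument via the subgroup of $\PGL(3, \bbF_4) \cdot \bbZ/2\bbZ$ stabilizing the $12$ contracted curves. Once this combinatorial input is in place, the remaining geometric content reduces to routine intersection theory on the purely inseparable double cover.
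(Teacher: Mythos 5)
Your proposal is correct and follows essentially the same route as the paper, which establishes the lemma by the descriptive paragraph preceding it: push the $30$ non-integral curves through $\tilde{\pi}$, use Proposition \ref{insep} and the inseparable pullback formula to get $\bar{F}^2=-2$ and the multiplicity-$2$ contacts, and locate the within-family intersections at the blown-down points $\tilde{\pi}(E)$, $\tilde{\pi}(E')$. The only difference is that the paper simply asserts the key combinatorial input (``$F$ meets exactly two curves among $E_1,\ldots,E_{12}$'') that you rightly isolate as needing verification against the explicit fiber-component/section structure of Section \ref{s4}; your proposed check (hexagon neighbours for fiber components, and the two fibers whose opposite-family components are all integral for sections) does close that gap.
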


In the following we show that the incidence relation between nodal curves in 
$\bar{\calA}$ and $\bar{\calB}$ is the same as that of
Sylvester's duads and synthemes.
First we recall Sylvester's duads and synthemes (see Baker \cite{Ba}, p.220). 
We denote by $ij$ the transposition of $i$ and $j$ 
($1 \leq i \not= j \leq 6$) which is classically called Sylvester's duad.
Six letters $1,2,3,4,5,6$ can be arranged in three pairs of duads,
for example, (12, 34, 56), called Sylvester's syntheme.  
(It is understood
that (12, 34, 56) is the same as (12, 56, 34) or (34, 12, 56)).
Duads and Synthemes are in $(3,3)$ correspondence, that is, 
each syntheme consists
of three duads and each duad belongs to three synthemes.  
 It is possible
to choose a set of five synthemes which together contain all the
fifteen duads.  Such a family is called a total.  The number of 
possible totals is six.  And every two totals have one, and only one
syntheme in common between them.  The following table gives 
the six totals $A, B, \ldots, F$ in its rows, and also 
in its columns (see Baker [Ba], p.221) :

\medskip

\halign{\hfil\tt#\hfil&&\quad#\hfil\cr
& & A & B & C & D & E & F \cr
\noalign{\smallskip}
& A &  & 14,25,36 & 16,24,35 & 13,26,45 & 12,34,56& 15,23,46 \cr
\noalign{\smallskip}
& B & 14,25,36& & 15,26,34& 12,35,46& 16,23,45& 13,24,56\cr
\noalign{\smallskip}
& C & 16,24,35& 15,26,34&  & 14,23,56& 13,25,46& 12,36,45\cr
\noalign{\smallskip}
& D & 13,26,45& 12,35,46& 14,23,56& & 15,24,36& 16,25,34\cr
\noalign{\smallskip}
& E & 12,34,56& 16,23,45& 13,25,46& 15,24,36& & 14,26,35\cr
\noalign{\smallskip}
& F & 15,23,46& 13,24,56& 12,36,45& 16,25,34& 14,26,35& \cr}

\medskip
\noindent
Now we consider six letters $1,\ldots, 6$ as the six points on $X$ which are the images of curves in $\calA$ contracted by $\tilde{\pi}$, and six totals $A,\ldots, F$
as the six points on $X$ which are the images of curves in $\calB$ contracted by $\tilde{\pi}$.
Also consider fifteen duads as fifteen nodal curves in $\bar{\calA}$.  The transposition $ij$ corresponds to the nodal curve through the two points $i$ and $j$.
On the other hand, consider fifteen synthemes as fifteen nodal curves in $\bar{\calB}$.  A syntheme 
corresponds to the nodal curve through the two points
corresponding to two totals containg the syntheme.
Then two curves in $\bar{\calA}$ meet if the corresponding
two duads have a common letter, and two curves in 
$\bar{\calB}$ meet if the corresponding two synthemes have
no common duads.  And
the $(3,3)$ correspondence between duads and synthemes
describes the intersection relation between fifteen curves
in $\bar{\calA}$ and fifteen curves in $\bar{\calB}$.  For example, the nodal curve
$(12,34,56)$ tangents to nodal curves $12, 34, 56$ and meets eight nodal curves
in $\bar{\calB}$ belonging to the totals $A$ or $E$ at the points $A$ and $E$ (see Figure \ref{enriques12}).
The nodal curve $12$ tangents to nodal curves $(12, 34, 56), (12,35,46), (12,36,45)$ and meets eight nodal curves in $\bar{\calA}$ containing the letter $1$ or $2$ at the points $1$ and $2$.
Thus fifteen duads, fifteen synthemes, six letters and six totals are realized on
the Enriques surface $X$ geometrically.

%\begin{figure}[htbp]
\begin{figure}[!htb]
 \begin{center}
  \includegraphics[width=120mm]{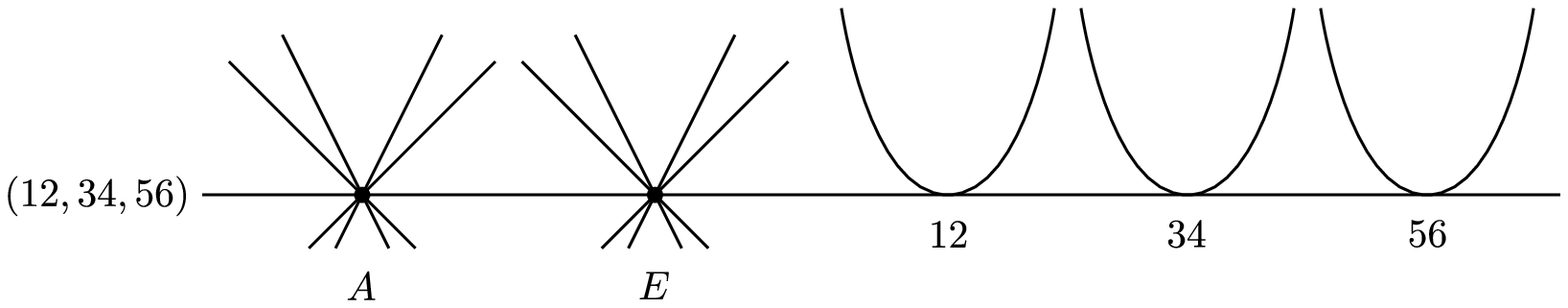}
 \end{center}
 \caption{}
 \label{enriques12}
\end{figure}

\section{Ten $(-2)$-divisors}\label{s6}

We keep the same notation in the previous section.  Recall that the $K3$ surface
$Y$ has 168 divisors given in $(\ref{168})$.

\begin{lemma}\label{10reflections}
There exist ten divisors among $168$ divisors which are orthogonal to $A_1^{\oplus 12}$ generated by $E_1,\ldots, E_{12}$.
\end{lemma}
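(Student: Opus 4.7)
The plan is to translate orthogonality with $A_1^{\oplus 12}$ into a combinatorial count on the 21 ${\bf F}_4$-points and 21 ${\bf F}_4$-lines of ${\bf P}^2$. First I would identify, from the list in Lemma \ref{key}(iii), which 6 of the 12 integral curves belong to family $\calA$ (so correspond to exceptional curves over a set $\calP$ of six ${\bf F}_4$-points) and which 6 belong to family $\calB$ (so are strict transforms of six ${\bf F}_4$-lines forming a set $\calL$). Factoring the cubic (\ref{pencil}) at $s = 1$ and $s = \infty$ produces the three line-components of the $I_3$ fibers of $\psi$ at those values, yielding $\calL$ as six specific ${\bf F}_4$-lines; similarly, the nodal points of the $I_3$ fibers at $s = \omega$ and $s = \omega^2$ give the six points of $\calP$.

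Next I compute the orthogonality condition for $R_I = 2\ell - \sum_{p \in I} C_p$. Using $\langle \ell, C_p\rangle = 0$, $\langle \ell, \tilde L\rangle = 1$ (projection formula, since $\tilde L$ maps birationally to $L$), and $\langle C_p, \tilde L\rangle$ equal to $1$ or $0$ according as $p \in L$ or not, the requirement that $R_I$ be orthogonal to all twelve $E_i$ reduces to the two conditions $I \cap \calP = \emptyset$ and $|I \cap L| = 2$ for every $L \in \calL$. So the problem becomes counting general 6-subsets $I$ of the 15 points of ${\bf P}^2({\bf F}_4) \setminus \calP$ meeting each line in $\calL$ in exactly two points.

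The crux is the following combinatorial observation. Each of these 15 points lies on exactly 2 of the 6 lines of $\calL$, so labeling $\calL$ by six letters produces a bijection between the 15 points and the 15 Sylvester duads, under which incidence with a line corresponds to containment of the corresponding letter. The ``two per line'' condition then says that $I$ is a set of 6 duads in which each letter appears exactly twice, i.e., a 2-regular graph on the 6-letter set; on 6 vertices such a graph is either a single hexagon (60 choices) or a pair of disjoint triangles (10 choices). A direct check --- for instance on the line $x_1 + x_2 = 0$, whose three non-$\calP$ points correspond to duads $\{a,b\}, \{c,d\}, \{e,f\}$ --- shows that each of the 15 ${\bf F}_4$-lines outside $\calL$ meets ${\bf P}^2({\bf F}_4) \setminus \calP$ in exactly three duads forming a Sylvester syntheme (a perfect matching on the 6 letters). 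Hence the general-position requirement on $I$ is equivalent to the condition that $I$ contain no syntheme as a 3-subset.

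A hexagon on 6 vertices contains exactly two perfect matchings of its vertex set, so all 60 hexagon configurations violate general position and are excluded. A pair of disjoint triangles contains no perfect matching at all (each triangle can supply at most one edge of any matching, yielding at most two), so all 10 double-triangle configurations survive; this gives exactly $\binom{6}{3}/2 = 10$ divisors orthogonal to $A_1^{\oplus 12}$, as claimed. The main technical obstacle is setting up the explicit correspondence between $\calL$-labels, ${\bf F}_4$-points, and Sylvester duads --- that is, verifying that the 15 remaining points biject with the 15 duads in an incidence-preserving way and that each of the 15 non-$\calL$ lines carries one of the 15 synthemes --- which, while routine, is the heart of the argument.
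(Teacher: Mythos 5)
Your proposal is correct and follows essentially the same route as the paper's proof: both translate orthogonality to $A_1^{\oplus 12}$ into the conditions that the six points avoid $\calP$ and meet each of the six lines in exactly two points, reduce to the dichotomy between a hexagonal configuration and two disjoint triangles, and rule out the hexagon by exhibiting a collinear triple (your syntheme/perfect-matching formulation is just a systematic repackaging of the paper's observation that the line through $ij$ and $kl$ must pass through $mn$). The surviving two-triangle configurations give the count $\binom{6}{3}/2=10$ in both arguments.
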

\begin{proof}
For simplicity, we assume that $E_1,\ldots, E_6$ are the pullbacks of six lines 
$\ell_1, \ldots, \ell_6$ in ${\bf P}^2({\bf F}_4)$ and
$E_7,\ldots, E_{12}$ are exceptional curves over ${\bf F}_4$-rational points $p_1,\ldots, p_6$ of ${\bf P}^2$.  Obviously $p_1,\ldots, p_6$ do not lie on $\ell_i$, $1\leq i \leq 6$.  Moreover the set $\{ p_1,\ldots, p_6\}$ of six points is general by construction.
Let $\tilde{r} = 2\ell - (C_1 + \cdots + C_6)$ be a divisor such that $C_1,\ldots,C_6$ are exceptional curves over
general six points $q_1,\ldots, q_6$ on ${\bf P}^2({\bf F}_4)$.  Assume that $\langle \tilde{r}, E_j\rangle =0$ for
$j = 1,\ldots, 12$.  Since $\langle \ell, C_i\rangle =0$, we see $\langle \tilde{r}, C_i\rangle =2$.  Hence we have $E_j\not= C_i$ $(i=1,\ldots, 6; j=7,\ldots, 12)$.
The condition $\langle \tilde{r}, E_j\rangle =0$ implies that each $E_j$ $(j=1,\ldots, 6)$ meets exactly two curves
in $\{ C_1,\ldots, C_6\}$.   This means that the six points $q_1,\ldots, q_6$ are intersection points of six lines
$\ell_1,\ldots, \ell_6$.  Thus the divisors $\tilde{r}$ satisfying $\langle \tilde{r}, E_j\rangle =0$ $(j=1,\ldots, 12)$
correspond to the set of general six points $q_1,\ldots, q_6$ which are intersections between $\ell_1,\ldots, \ell_6$. 
We will show that 
six lines $\ell_1,\ldots, \ell_6$ are divided into two sets $\{ \ell_i, \ell_j, \ell_k\}$ and $\{\ell_l, \ell_m, \ell_n\}$ such that 
six points $q_1,\ldots, q_6$ coincide with the intersection points of three lines $\ell_i, \ell_j, \ell_k$ and those of $\ell_l, \ell_m, \ell_n$. 
Denote by $ij$ the intersection point of $\ell_i$ and $\ell_j$.
If six points are given by $ij, jk, ki, mn, nl, lm$, then
we have the desired one.  Otherwise six points are given by $ij, jk, kl, lm, mn, ni$ because each letter appears twice.  In this case, the line $\ell$ through $ij$ and $kl$ does not appear in 
$\{\ell_1,\ldots, \ell_6\}$.  Since the set $\{ p_1,\ldots, p_6\}$ of six points is general, $\ell$ passes exactly two points in $\{ p_1,\ldots, p_6\}$.
%Therefore $\ell$ passes two points in $\{p_1,\ldots, p_6\}$.  
Since $\ell$ contains five ${\bf F}_4$-rational points,
it should pass one more point not lying on $\ell_i\cup \ell_j \cup \ell_k\cup \ell_l$ because $\ell  \cap \{ \ell_i\cup \ell_j \cup \ell_k\cup \ell_l\} = \{ij, kl\}$.
This implies that $\ell$ passes the remaining point $mn$.  This contradicts the generality of
the six points $ij, jk, kl, lm, mn, ni$.
Thus we have the assertion.
\end{proof}

Let $\tilde{r}_a, \tilde{r}_b, \ldots, \tilde{r}_j$ be ten
divisors in $\NS(Y)$ indexed by ten letters $a,b, \ldots, j$ which are given in Lemma \ref{10reflections}.
Let $r_a, r_b,\ldots,
r_j \in \Num(X)$ be the images of $\tilde{r}_a, \tilde{r}_b, \ldots, \tilde{r}_j$. 
Since $\tilde{r}_a^2 =\cdots = \tilde{r}_j^2 = -4$, 
we have $r_a^2 =\cdots = r_j^2 =-2$.
Consider two distinct divisors $\tilde{r}$ and $\tilde{r}'$.
Assume that $\tilde{r}$ (resp. $\tilde{r}'$) correspond to six points $q_1,..., q_6$ (resp. $q_1',..., q_6'$)
which are the union of intersection points of $\ell_i, \ell_j, \ell_k$ and those of $\ell_l, \ell_m, \ell_n$
(resp. the union of intersections of $\ell_{i'}, \ell_{j'}, \ell_{k'}$ and those of $\ell_{l'}, \ell_{m'}, \ell_{n'}$).
Note that either $|\{i, j, k\}\cap \{i', j', k'\}| =2$ or $|\{i, j, k\}\cap \{l', m', n'\}|=2$.  This implies that
$$|\{q_1,\ldots, q_6\} \cap \{q_1',\ldots, q_6'\}| = 2.$$
Therefore we have $\langle \tilde{r}_a, \tilde{r}_b\rangle = 4$, and hence $\langle r_a, r_b\rangle = 2$.  Thus 
we have the following Lemma.

\begin{lemma}\label{complete}
The dual graph of $\{r_a, r_b,\ldots, r_j\}$ is a complete graph whose
edges are double lines.
\end{lemma}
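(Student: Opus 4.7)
The lemma is the clean numerical statement that assembles the two intersection computations performed just above. The self-intersections $r_a^2 = -2$ have already been noted in the paragraph preceding the lemma: they follow from $\tilde r_a^2 = -4$ via the degree-two pullback identity $\langle \tilde\pi^*\alpha, \tilde\pi^*\beta\rangle = 2\langle\alpha,\beta\rangle$ on $\Num(X)$. So my plan is to reduce the cross-pairing claim $\langle r_a, r_b\rangle = 2$ in the same way, to showing $\langle\tilde r_a,\tilde r_b\rangle = 4$ for every pair of distinct indices.

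For this, I would expand $\tilde r = 2\ell - (C_1+\cdots+C_6)$ from $(\ref{168})$ and use $\ell^2 = 2$ (pullback of a line under the degree-$2$ cover $p : Y \to \bbP^2$), $\ell\cdot C_i = 0$, and the fact that distinct exceptional curves on $Y$ are disjoint $(-2)$-curves, to obtain
\[
\langle\tilde r_a, \tilde r_b\rangle = 8 + \sum_{i,j}\langle C_i, C_j'\rangle = 8 - 2k,
\]
where $k$ counts the exceptional curves common to the two divisors, equivalently the number of points of $\bbP^2(\bbF_4)$ shared by the two sextuples that define $\tilde r_a$ and $\tilde r_b$. The entire lemma will therefore reduce to the combinatorial fact that $k = 2$ for any two distinct sextuples among the ten constructed in Lemma \ref{10reflections}.

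This combinatorial step is the main obstacle, but it has already been handled in the paragraph immediately preceding the lemma: each sextuple corresponds to an unordered partition of $\{\ell_1,\ldots,\ell_6\}$ into two triples, and for two distinct partitions, after swapping the two triples of one of them the two ``first'' triples share exactly two letters (say $i,j$) and the two ``second'' triples share exactly two letters (say $m,n$); the two common intersection points are then $\ell_i\cap\ell_j$ and $\ell_m\cap\ell_n$, so $k=2$. This will yield $\langle\tilde r_a,\tilde r_b\rangle = 4$ and hence $\langle r_a,r_b\rangle = 2$, exhibiting the dual graph of $\{r_a,\ldots,r_j\}$ as the complete graph $K_{10}$ with each edge a double edge.
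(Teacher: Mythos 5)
Your proposal is correct and follows essentially the same route as the paper: the paper's argument (given in the paragraph immediately preceding the lemma) likewise deduces $\langle r_a,r_b\rangle=2$ from $\langle \tilde r_a,\tilde r_b\rangle=4$, which in turn rests on the combinatorial fact that the two sextuples of points share exactly two members, i.e.\ your $k=2$. You merely make explicit the intersection expansion $8-2k$ that the paper leaves implicit.
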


Now, we discuss the incidence relation between ten $(-2)$-vectors $r_a,\ldots, r_j$ and 
fifteen duads, fifteen synthemes.

\begin{lemma}\label{10divisors}
Each vector in $\{r_a,\ldots, r_j\}$ meets exactly six duads and six synthemes with intersection multiplicity two.
\end{lemma}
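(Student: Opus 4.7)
The plan is to pull back the intersection computation from $X$ to $Y$ and reduce to a small combinatorial count in ${\bf P}^2({\bf F}_4)$.

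First I would derive a pullback formula for $\tilde{\pi}^{*}(\bar{C})$ when $C$ is a non-contracted nodal curve on $Y$ and $\bar{C}=\tilde{\pi}(C)$. By Proposition \ref{insep}(ii), $\pi^{-1}(\pi(C))=2C$; combining with the blow-down $\varphi$ yields
\[
\tilde{\pi}^{*}(\bar{C})\;=\;2C+\sum_{i=1}^{12}\langle C, E_i\rangle\,E_i,
\]
where the coefficient of $E_i$ is the multiplicity of $\bar{C}$ at $\varphi(E_i')$, which equals $\langle\pi(C),E_i'\rangle=\langle C,E_i\rangle$. Since $\tilde{r}_a$ is orthogonal to every $E_i$ by Lemma \ref{10reflections}, the projection formula gives $\langle r_a,\bar{C}\rangle=\langle\tilde{r}_a,C\rangle$. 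Writing $\tilde{r}_a=2\ell-(C_{q_1}+\cdots+C_{q_6})$, where $\{q_1,\ldots,q_6\}$ are the six intersection points of lines inside $S$ and inside $S^c$ (for the partition $\{S,S^c\}$ of $\{\ell_1,\ldots,\ell_6\}$ associated to $r_a$), the problem reduces to computing $\langle\tilde{r}_a,C\rangle$ for the thirty non-contracted nodal curves on $Y$.

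For a syntheme, $C=C_q$ is the exceptional curve over a point $q\in{\bf P}^2({\bf F}_4)\setminus\{p_1,\ldots,p_6\}$. Since $\ell\cdot C_q=0$ and distinct exceptional curves are orthogonal, $\langle\tilde{r}_a,C_q\rangle=2$ if $q\in\{q_1,\ldots,q_6\}$ and $0$ otherwise, which produces exactly six synthemes meeting $r_a$ with multiplicity two. For a duad, $C=L_m$ is the strict transform of a line $m\notin\{\ell_1,\ldots,\ell_6\}$. A self-intersection bookkeeping (using $\ell^2=2$, $C_q^2=-2$, $L_m\cdot C_q=1$ iff $q\in m$, and $p^{*}m\cdot C_q=0$) yields $p^{*}m=2L_m+\sum_{q\in m\cap{\bf P}^2({\bf F}_4)}C_q$, hence $\ell\cdot L_m=1$. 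Therefore $\langle\tilde{r}_a,L_m\rangle=2-\#\{s:q_s\in m\}$, and the task becomes counting duad lines $m=\overline{p_\alpha p_\beta}$ that avoid all six $q_s$.

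The key combinatorial observation is that $m$ meets each $\ell_i$ in exactly one point, giving six incidences spread over the five ${\bf F}_4$-points of $m$; since $p_\alpha,p_\beta$ lie on no $\ell_i$, these six incidences concentrate on the remaining three points, two per point (by general position of the six lines). Those three points are therefore intersections $\ell_{a_i}\cap\ell_{b_i}$ for $i=1,2,3$ whose pairs $\{a_i,b_i\}$ form a partition of $\{1,\ldots,6\}$, i.e.\ a syntheme of line labels. Given $\{S,S^c\}$ with $|S|=3$, a pair contributes a $q_s\in m$ exactly when it lies entirely inside $S$ or inside $S^c$. Because $|S|$ is odd, every such syntheme falls into one of two types: either one pair is in $S$, one in $S^c$, and one is mixed (giving $\#\{q_s\in m\}=2$), or all three pairs cross (giving $\#\{q_s\in m\}=0$). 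The second type is parametrized by bijections $S\to S^c$, of which there are $3!=6$; hence exactly six duad lines $m$ avoid all six $q_s$, yielding six duads meeting $r_a$ with multiplicity two.

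The main technical input is the pullback formula together with the self-intersection identity $p^{*}m=2L_m+\sum_{q\in m}C_q$; once these are established, the "two triangles" combinatorics on ${\bf P}^2({\bf F}_4)$ produces the count of six cleanly. The one place where care is required is the general-position assumption used to split the six incidences on a duad line into three pairs.
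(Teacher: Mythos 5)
Your argument is correct and follows essentially the same route as the paper: both reduce $\langle r_a,\bar{C}\rangle$ to $\langle\tilde{r}_a,C\rangle$ on $Y$ using the orthogonality of $\tilde{r}_a$ to $E_1,\ldots,E_{12}$ (the paper via the class $2C+E+E'$, you via the equivalent pullback formula), and then evaluate case by case on exceptional curves over ${\bf F}_4$-points versus strict transforms of ${\bf F}_4$-lines. The only differences are cosmetic --- your duad/syntheme labels are swapped relative to the convention in the proof of Lemma \ref{10reflections}, which is harmless by the switch symmetry, and you spell out explicitly the perfect-matching count (six ``all-crossing'' matchings for the partition $\{S,S^c\}$) that the paper compresses into ``the proof \dots is similar.''
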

\begin{proof}
We use the same notation as in the proof of Lemma \ref{10reflections}.  
Let $C$ be the nodal curve on $Y$ corresponding to a duad.
Then $C$ meets exactly two nodal curves $E, E'$ in $\{E_1,\ldots. E_6\}$.  Then $2C + E+E'$ is perpendicular to
$A_1^{\oplus 12}$, that is, $2C + E+E' \in \tilde{\pi}^*(\Num(X)) = U(2)\oplus E_8(2)$.  Let 
$\tilde{r} = 2\ell - (C_1 + \cdots + C_6)$ be  a divisor in $\{\tilde{r}_a,\ldots, \tilde{r}_j\}$.
Then $\langle E, C_1 + \cdots + C_6\rangle = \langle E', C_1 + \cdots + C_6\rangle = 2$.
If $C$ appears in $\{ C_1,\ldots, C_6\}$, then 
$$\langle \tilde{r}, 2C + E+E'\rangle = 4,$$
and if $C$ does not appear in $\{ C_1,\ldots, C_6\}$, then 
$$\langle \tilde{r}, 2C + E+E'\rangle = 0.$$
The proof for which $C$ corresponds to a syntheme is similar.
Thus we have the assertion.
\end{proof}

\noindent
We can identfy ten divisors $r_a,\ldots, r_j$ with
ten symbols 
$$(123,456), (124,356),(125,346),(126,345), (134,256),$$
$$(135,246),(136,245),(145,236),(146,235),(156,234).$$
For example, $(123,456)$ meets six duads $12, 13,23,45,46,56$ and six synthemes 
$$(14,25,36), (14,26,35),
(15,24,36),(15,26,34),(16,24,35),(16,25,34).$$

\medskip
We denote by $\Gamma$ the dual graph of 30 nodal curves and
ten $(-2)$-divisors.

\begin{remark}
The graph $\Gamma$ appears in other places.  For example, consider the moduli space of
principally polarized abelian surfaces with level $2$-structure over the field ${\bf C}$
of complex numbers.  It has fifteen $0$-dimensional and fifteen $1$-dimensional boundary components and contains ten divisors
parametrizing abelian surfaces of product type {\rm (e.g. see \cite{vG})}.  On the other hand,
S. Mukai found the existence of the above configuration of $30$ nodal curves and ten $(-2)$-vectors on an Enriques surface defined over
${\bf C}$ {\rm (}unpublished{\rm )}.  
%He called this configuration $15A + 15B + 10C$. 
\end{remark}

\begin{prop}
The automorphism group of the graph $\Gamma$ is
isomorphic to the automorphism group $\Aut(\mathfrak{S}_6)$ of the symmetric group $\mathfrak{S}_6$ of degree $6$.
\end{prop}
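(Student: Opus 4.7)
The plan is to identify $\Aut(\Gamma)$ with $\Aut(\mathfrak{S}_6)$ by two matching inclusions. For $\Aut(\mathfrak{S}_6) \hookrightarrow \Aut(\Gamma)$: the inner $\mathfrak{S}_6$ permutes the labels $1,\dots,6$ and thereby acts on the fifteen duads, fifteen synthemes, and ten $(3,3)$-partitions while respecting every incidence described in Sections \ref{s5}--\ref{s6}, giving a faithful embedding $\mathfrak{S}_6 \hookrightarrow \Aut(\Gamma)$. For the outer factor, the exotic outer automorphism of $\mathfrak{S}_6$ exchanges the conjugacy class of transpositions with that of $(2,2,2)$-elements and hence exchanges duads with synthemes compatibly with the $(3,3)$-incidence; a direct check using the table of totals above shows that this swap extends to a self-bijection of the ten divisors preserving the divisor--duad and divisor--syntheme incidences, producing an element of $\Aut(\Gamma)$ outside the inner $\mathfrak{S}_6$.

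For the reverse inclusion I first observe that the ten divisors are intrinsic to $\Gamma$: each divisor has $9+6+6=21$ graph neighbours by Lemmas \ref{complete} and \ref{10divisors}, while each duad or syntheme has only $8+3+4=15$, so $\Aut(\Gamma)$ stabilises the subset $\{r_a,\ldots,r_j\}$ and hence its thirty-vertex complement. Restricting $\Gamma$ to this complement and retaining only the single edges (distinguished from the double edges by the edge-multiplicity structure of $\Gamma$) produces a disjoint union of two graphs, each isomorphic to the Johnson graph $J(6,2)$: the subgraph on $\bar{\calA}$ is $J(6,2)$ on the six letters via ``two duads share a letter'', and the subgraph on $\bar{\calB}$ is $J(6,2)$ on the six totals, since two synthemes share no duad if and only if they belong to a common total. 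Consequently every $\varphi \in \Aut(\Gamma)$ either preserves or exchanges the bipartition $\bar{\calA}\sqcup\bar{\calB}$, cutting out a subgroup $\Aut(\Gamma)_0$ of index at most two.

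To compute $\Aut(\Gamma)_0$ I apply the classical fact $\Aut(J(6,2))=\mathfrak{S}_6$ (valid since $6>2\cdot 2$), giving a restriction map $\Aut(\Gamma)_0 \to \mathfrak{S}_6$ whose image contains the inner $\mathfrak{S}_6$. The main step is injectivity: if $\varphi\in\Aut(\Gamma)_0$ fixes every duad, then each syntheme, being determined by its triple of double-edge duad-neighbours (the three duads it contains), is fixed, and each divisor, being determined by its six double-edge duad-neighbours (the two triples of the partition $\{ijk, lmn\}$ being recovered as the two connected components of the ``share a letter'' graph on those six duads), is fixed; hence $\varphi=1$. Therefore $\Aut(\Gamma)_0 \cong \mathfrak{S}_6$, and together with the outer swap from paragraph one this gives $\Aut(\Gamma)\cong \mathfrak{S}_6 \rtimes \bbZ/2\bbZ = \Aut(\mathfrak{S}_6)$. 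The principal technical obstacle is the combinatorial verification of paragraph one that the Sylvester swap of duads with synthemes truly extends to a self-bijection of the ten divisors preserving all incidences; this is routine with the table of totals but needs care.
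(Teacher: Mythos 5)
Your proof is correct and follows essentially the same route as the paper's: embed $\Aut(\mathfrak{S}_6)$ into $\Aut(\Gamma)$, normalize an arbitrary graph automorphism by the outer swap of duads and synthemes, reduce to a permutation of the six letters, and check that fixing the letters forces the identity on all of $\Gamma$. The paper compresses the reverse inclusion into a few sentences ("obviously\ldots", "acts on $\Gamma$ identically"), whereas you supply the justifications it leaves implicit --- the degree count $21$ versus $15$ isolating the ten divisors, the identification of the single-edge subgraphs on $\bar{\calA}$ and $\bar{\calB}$ with $J(6,2)$, and the triviality of the kernel of the restriction to the duads --- which are exactly the right details to make the argument complete.
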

\begin{proof}
Recall that $\Aut(\mathfrak{S}_6)$ is generated by $\mathfrak{S}_6$ and an outer automorphism.  An outer automophism interchanges
duads with synthemes, and six letters $1,\ldots ,6$ with six totals $A,\ldots , F$ respectively.
Obviously $\Aut(\mathfrak{S}_6)$ preserves the graph $\Gamma$.  Let $g$ be an automorphism of $\Gamma$.  If necessary, by compositing an outer automorphism, we assume $g$ preserves six letters.  If $g$ fixes each of six letters, then $g$ acts on $\Gamma$ identically.  Thus $g$ is contained in $\mathfrak{S}_6$.
\end{proof}

\begin{remark}
The N\'eron-Severi lattice $\NS(Y)$ is isomorphic to the orthogonal complement of the root lattice $D_4$ in the even unimodular lattice $II_{1,25}$ of signature $(1,25)$.
If we embed $\NS(Y)$ into $II_{1,25}$ as the orthogonal complement, then $42$ nodal curves and $168$ $(-4)$-divisors on $Y$ are the projections of Leech roots into $\NS(Y)$ {\rm (see \cite{DK}, \S 3.3)}.  The lattice $\tilde{\pi}^*(\Num(X))$ $(\cong U(2)\oplus E_8(2))$ is the orthogonal complement of $D_4\oplus A_1^{\oplus 12}$ in $II_{1,25}$, and the above $30$ nodal curves and $10$ $(-2)$-divisors on $X$ correspond to the projections of some Leech roots.
\end{remark}

\section{Automorphisms}\label{s7}
Let $S$ be an Enriques surface.  
Let $\Num(S)$ be the N\'eron-Severi lattice
modulo torsions. Then $\Num(S)$ is an even unimodular lattice of signature $(1,9)$ (Cossec-Dolgachev \cite{CD}).
We denote by ${\rm O}(\Num(S))$ the orthogonal group of $\Num(S)$. The set 
$$\{ x \in \Num(S)\otimes {\bf R} \ : \ \langle x, x \rangle > 0\}$$ 
has two connected components.
Denote by $P(S)$ the connected component containing an ample class of $S$.  
For $\delta \in \Num(S)$ with $\delta^2=-2$, we define
an isometry $s_{\delta}$ of $\Num(S)$ by
$$s_{\delta}(x) = x + \langle x, \delta\rangle \delta, \quad x \in \Num(S).$$ 
The $s_{\delta}$ is called the reflection associated with $\delta$.
Let $W(S)$ be the subgroup of
${\rm O}(\Num(S))$ generated by reflections associated with all nodal curves on $S$.  Then $P(S)$ is divided into chambers 
each of which is a fundamental domain with respect to
the action of $W(S)$ on $P(S)$.
There exists a unique chamber containing an ample
class which is nothing but the closure of the ample cone $D(S)$ of $S$.
It is known that $\Aut(D(S))$ is isomorphic to 
the quotient group ${\rm O}(\Num(S))/\{\pm 1\}\cdot W(S)$.
The natural map
$$\Aut(S) \to \Aut(D(S))$$
is isomorphic up to finite groups, that is, it has finite kernel and cokernel (e.g. Dolgachev \cite{D}).
In particular $\Aut(S)$ is finite if and only if
$W(S)$ is of finite index in ${\rm O}(\Num(S))$.
Over the field of complex numbers, Enriques surfaces with finite
group of automorphisms were classified by Nikulin \cite{N}
and the second author \cite{Ko}.  
In general it is difficult to describe the group
$\Aut(D(S))$.  

%Let $X$ be the Enriques surface given in Theorem \ref{main}. 
%In the following we discuss $\Aut(X)$.
Now, we recall Vinberg's criterion for
which a group generated by finite number of reflections is
of finite index in ${\rm O}(\Num(S))$.

Let $\Delta$ be a finite set of $(-2)$-vectors in $\Num(S)$.
Let $\Gamma$ be the graph of $\Delta$, that is,
$\Delta$ is the set of vertices of $\Gamma$ and two vertices $\delta$ and $\delta'$ are joined by $m$-tuple lines if $\langle \delta, \delta'\rangle=m$.
We assume that the cone
$$K(\Gamma) = \{ x \in \Num(S)\otimes {\bf R} \ : \ \langle x, \delta_i \rangle \geq 0, \ \delta_i \in \Delta\}$$
is a strictly convex cone. Such $\Gamma$ is called non-degenerate.
A connected parabolic subdiagram $\Gamma'$ in $\Gamma$ is a  Dynkin diagram of type $\tilde{A}_m$, $\tilde{D}_n$ or $\tilde{E}_k$ (see \cite{V}, p. 345, Table 2).  If the number of vertices of $\Gamma'$ is $r+1$, then $r$ is called the rank of $\Gamma'$.  A disjoint union of connected parabolic subdiagrams is called a parabolic subdiagram of $\Gamma$.  The rank of a parabolic subdiagram is the sum of the rank of its connected components.  Note that the dual graph of singular fibers of an elliptic fibration on $Y$ gives a parabolic subdiagram.  For example, a singular fiber of type $III$, $IV$ or $I_{n+1}$ defines a parabolic subdiagram of type $\tilde{A}_1$, $\tilde{A}_2$ or 
$\tilde{A}_n$ respectively.  
We denote by $W(\Gamma)$ the subgroup of ${\rm O}(\Num(S))$ 
generated by reflections associated with $\delta \in \Gamma$.

\begin{prop}\label{Vinberg}{\rm (Vinberg \cite{V}, Theorem 2.3)}
Let $\Delta$ be a set of $(-2)$-vectors in $\Num(S)$
and let $\Gamma$ be the graph of $\Delta$.
Assume that $\Delta$ is a finite set, $\Gamma$ is non-degenerate and $\Gamma$ contains no $m$-tuple lines with $m \geq 3$.  Then $W(\Gamma)$ is of finite index in ${\rm O}(\Num(S))$ if and only if every connected parabolic subdiagram of $\Gamma$ is a connected component of some
parabolic subdiagram in $\Gamma$ of rank $8$ {\rm (}= the maximal one{\rm )}.
\end{prop}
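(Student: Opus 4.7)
The plan is to translate the criterion into hyperbolic geometry, where both sides have natural geometric meaning. Since $\Num(S)$ has signature $(1,9)$, the projectivization of the positive cone $P(S)$ is a model of hyperbolic $9$-space $H^{9}$; each wall $\{x : \langle x, \delta_i\rangle = 0\}$ for $\delta_i \in \Delta$ descends to a hyperplane in $H^{9}$, and the cone $K(\Gamma)$ descends to a convex polyhedron $\mathcal{P} \subset H^{9}$. Non-degeneracy of $\Gamma$ together with the assumption that no $m$-tuple edges occur for $m\geq 3$ (so dihedral angles lie in $\{\pi/2,\pi/3,0\}$) guarantees that $\mathcal{P}$ is a Coxeter polyhedron and that $W(\Gamma)$ acts on $H^{9}$ as a discrete reflection group with $\mathcal{P}$ as fundamental domain.

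First I would reduce the algebraic index condition to a geometric volume condition: $W(\Gamma)$ is of finite index in $\mathrm{O}(\Num(S))$ if and only if $\mathcal{P}$ has finite hyperbolic volume. The forward direction uses that $\mathrm{O}(\Num(S))$ is an arithmetic subgroup of $O(1,9)$, hence of finite covolume by Borel--Harish-Chandra, and a finite-index subgroup inherits finite covolume. Conversely, if $\mathcal{P}$ has finite volume, the ``extra'' symmetries of $\mathcal{P}$ coming from the quotient $\mathrm{O}(\Num(S))/W(\Gamma)$ must permute the finitely many walls and thus form a finite group.

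Next I would study $\mathcal{P}$ at infinity. Ideal points correspond to isotropic rays lying in $\overline{K(\Gamma)}$, and to each such ray $[v]$ one attaches the subdiagram $\Gamma_{v} = \{\delta \in \Gamma : \langle v, \delta\rangle = 0\}$, whose Gram matrix is negative semi-definite with $v$ in its radical, hence parabolic. Using the classification of connected positive semi-definite Dynkin diagrams $\tilde{A}_{m},\tilde{D}_{n},\tilde{E}_{k}$, each connected component has one-dimensional radical, so a parabolic subdiagram of rank $r$ cuts out an isotropic face of $\mathcal{P}$ at infinity of dimension $8-r$. Consequently $[v]$ is a genuine $0$-dimensional cusp of $\mathcal{P}$ exactly when $\Gamma_{v}$ attains the maximal rank $8$; otherwise $[v]$ lies in the relative interior of a positive-dimensional face at infinity, forcing $\mathcal{P}$ to extend unboundedly in a non-cuspidal direction.

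The main obstacle is the geometric fact: a locally finite convex Coxeter polyhedron in $H^{n}$ has finite volume if and only if it has no positive-dimensional faces at infinity. The forward direction uses the upper-half-space model around each cusp to show that an unbounded face at infinity contributes a divergent volume integral. The reverse direction requires, for every connected parabolic subdiagram $\Gamma'$ of rank $r < 8$, building a compatible disjoint union with other parabolic components extending $\Gamma'$ to rank $8$; this extension corresponds geometrically to closing up each face at infinity into an honest cusp. The careful bookkeeping matching algebraic ``extensions of $\Gamma'$ to rank $8$'' with geometric ``cusps of $\mathcal{P}$'' is the technical heart of Vinberg's argument and the step where the combinatorial criterion of the proposition emerges in its stated form.
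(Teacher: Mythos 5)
The paper does not actually prove this proposition: it is quoted from Vinberg \cite{V}, Theorem 2.3, and the authors explicitly refer the reader there (and to \cite{Ko}, Theorem 1.9) for the proof. So there is no internal argument to compare yours against; what you have written is an outline of Vinberg's own proof, and it follows the standard route: finite index of $W(\Gamma)$ in the arithmetic group ${\rm O}(\Num(S))$ is equivalent to finite covolume, hence to finite hyperbolic volume of the Coxeter polyhedron $\mathcal{P}$ cut out by $K(\Gamma)$ in $H^{9}$, and finite volume is equivalent to every ideal face of $\mathcal{P}$ being a single cusp, which is then decoded combinatorially through parabolic subdiagrams.

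Two points deserve flagging. First, your assertion that for every isotropic ray $[v]$ in $\overline{K(\Gamma)}$ the subdiagram $\Gamma_{v}=\{\delta\in\Delta : \langle v,\delta\rangle=0\}$ is parabolic ``with $v$ in its radical'' is false as stated: $v$ lies in the radical of $v^{\perp}$, but it need not lie in the span of $\Gamma_{v}$, and indeed $\Gamma_{v}$ can be empty or elliptic --- this is exactly what happens at the ideal points responsible for infinite volume. The dichotomy you need is ``parabolic of rank $8$'' versus ``everything else,'' not ``parabolic of rank $8$'' versus ``parabolic of smaller rank.'' Second, the equivalence between ``every connected parabolic subdiagram completes to one of rank $8$'' and ``every face of $\mathcal{P}$ at infinity is a point,'' which you correctly identify as the technical heart, is precisely the content of the theorem and is left as a black box in your sketch; similarly, the fact that $\mathcal{P}$ is a fundamental domain for $W(\Gamma)$ (needed for the covolume comparison) rests on Poincar\'e's polyhedron theorem and uses the hypothesis excluding $m$-tuple edges with $m\geq 3$ in an essential way, since a triple or higher edge produces divergent walls and the criterion would then have to account for Lann\'er-type subdiagrams as well. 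As a summary of Vinberg's strategy your outline is accurate; as a self-contained proof it is not, though for the purposes of this paper nothing beyond the citation is intended.
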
 
\noindent
For the proof of Proposition \ref{Vinberg}, see Vinberg \cite{V} (also see \cite{Ko}, Theorem 1.9).

\medskip
Let $X$ be the Enriques surface given in Theorem \ref{main}.
In the following,
as $\Delta$ we take $40$ $(-2)$-vectors in $\Num(X)$ corresponding to
fifteen duads, fifteen synthemes and ten $(-2)$-vectors given in
the previous section.  Let $\Gamma$ be the graph of
these 40 vectors.  We directly see the following Lemma.

\begin{lemma}\label{parabolic}
The maximal parabolic subdiagrams of $\Gamma$ are
$$\tilde{A}_2\oplus \tilde{A}_2\oplus \tilde{A}_2\oplus \tilde{A}_2,\  \tilde{A}_4\oplus \tilde{A}_4,\ \tilde{A}_5\oplus \tilde{A}_2\oplus \tilde{A}_1,\ \tilde{A}_3\oplus \tilde{A}_3 \oplus \tilde{A}_1 \oplus \tilde{A}_1$$
each of which has the maximal rank $8$. 
\end{lemma}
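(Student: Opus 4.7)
The plan is a direct combinatorial verification on the 40-vertex graph $\Gamma$, guided by its $\Aut(\mathfrak{S}_6)$-symmetry and the edge structure determined in the previous section. First I identify the edge types in $\Gamma$: by Lemmas \ref{complete} and \ref{10divisors}, every edge incident to one of the ten $r$-vectors and every duad--syntheme edge of $\Gamma$ is double, so the only single edges are the fifteen duad--duad edges (sharing a letter) and the fifteen syntheme--syntheme edges (sharing a total), each forming a copy of the Johnson graph $T(6)=J(6,2)$. Consequently every connected parabolic component of type $\tilde{A}_n$ with $n\geq 2$ lies entirely inside the duad subgraph or entirely inside the syntheme subgraph; inspection of $T(6)$ shows further that no induced $\tilde{D}_n$ or $\tilde{E}_k$ can appear there. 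Thus the only admissible connected components are of type $\tilde{A}_n$, with $\tilde{A}_1$ additionally arising from any double edge.

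Next I exhibit each of the four claimed types. For $\tilde{A}_2^{\oplus 4}$, take the two letter-triple triangles $\{12,13,23\}$, $\{45,46,56\}$ together with the cross-matching syntheme triangles $\{(14,25,36),(15,26,34),(16,24,35)\}$ and $\{(14,26,35),(15,24,36),(16,25,34)\}$ arising from the two $3$-cycles of bijections $\{1,2,3\}\to\{4,5,6\}$. For $\tilde{A}_4^{\oplus 2}$, take the duad $5$-cycle $12,23,34,45,15$ and the induced $5$-cycle formed by the five synthemes $(13,24,56)$, $(14,25,36)$, $(16,24,35)$, $(13,25,46)$, $(14,26,35)$ that avoid those five pairs. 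For $\tilde{A}_5\oplus\tilde{A}_2\oplus\tilde{A}_1$, take the duad hexagon $12,23,34,45,56,16$, the syntheme triangle $\{(13,25,46),(14,26,35),(15,24,36)\}$ of three synthemes pairwise sharing distinct totals, and the double edge joining $r_{(135,246)}$ with the syntheme $(14,25,36)$. For $\tilde{A}_3^{\oplus 2}\oplus\tilde{A}_1^{\oplus 2}$, take the duad $4$-cycle $12,23,34,14$, the induced syntheme $4$-cycle on $\{(13,25,46),(15,24,36),(13,26,45),(16,24,35)\}$, and the two double edges $\{r_{(135,246)},r_{(136,245)}\}$ and $\{(13,24,56),\,56\}$. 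In each case one checks by direct inspection that the induced subgraph on the chosen vertices coincides precisely with the claimed diagram.

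Finally I argue maximality. Because each extended Dynkin component contributes one null direction, the rank of a parabolic subdiagram equals its vertex count minus the number of connected components, and is bounded above by the rank of the negative definite part of $\Num(X)$. A direct enumeration on $\Gamma$ shows that the maximum achieved is $8$, matching each of the four configurations above. The classification then amounts to listing all rank-$8$ multisets of $\tilde{A}$-components that admit a disjoint realization in $\Gamma$: the $\Aut(\mathfrak{S}_6)$-symmetry reduces each connected $\tilde{A}_n$ in $T(6)$ to a small number of orbit representatives, and disjointness with the rest of a configuration is controlled by the $(3,3)$-correspondence between duads and synthemes together with the $r$-vector incidences of Lemma \ref{10divisors}. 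Only the four listed partitions of $8$ survive.

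The main obstacle is this final classification step: ruling out competing partitions such as $\tilde{A}_6\oplus\tilde{A}_2$, $\tilde{A}_7\oplus\tilde{A}_1$, and $\tilde{A}_8$ requires observing that a long induced cycle in $T(6)$ saturates enough letters that no disjoint small cycle or admissible double edge can fit in the complementary graph. The $\Aut(\mathfrak{S}_6)$-action keeps this finite case analysis manageable.
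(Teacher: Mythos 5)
Your proposal is correct and follows essentially the same route as the paper, which offers no argument beyond ``we directly see'' together with one explicit example of each of the four configurations (your examples for $\tilde{A}_2^{\oplus 4}$ and $\tilde{A}_4^{\oplus 2}$ coincide with the paper's, and your other two are valid $\Aut(\mathfrak{S}_6)$-variants of the paper's). One small simplification for your final enumeration: components $\tilde{A}_n$ with $n\geq 6$ are excluded outright because the single-edge subgraphs are two copies of the triangular graph $T(6)=L(K_6)$, whose induced cycles have length at most $6$, so no saturation/disjointness argument is needed to rule out partitions such as $\tilde{A}_6\oplus\tilde{A}_2$ or $\tilde{A}_8$.
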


In the following we give an example of each maximal parabolic subdiagrams.

\medskip
\noindent
(i) \ The diagram $\tilde{A}_2\oplus \tilde{A}_2\oplus \tilde{A}_2\oplus \tilde{A}_2$ corresponds to an elliptic fibration on $X$ with four singular fibers of type $I_3$.
For example, four sets 
$$\{ 12, 23, 13\}, \{ 45, 46, 56\}, \{ (14,25,36), (15,26,34),(16,24,35)\},$$ 
$$\{ (14,26,35), (15,24,36), (16,25,34)\}$$ 
are components of
singular fibers of an elliptic fibration of this type.
The syntheme $(12,35,46)$ is a 2-section of this fibration.

\medskip
\noindent
(ii) \ The diagram $\tilde{A}_4\oplus \tilde{A}_4$ corresponds to an elliptic fibration on $X$ with two singular fibers of type $I_5$.
For example, two sets $\{ 12,23,34,45,15\}$ and
 
\noindent
$\{ (13,25,46), (14,26,35),(13,24,56),(14,25,36),(16,24,35)\}$ are components of singular fibers of an elliptic fibration  and the duad $46$ is a 2-section of this fibration.

\medskip
\noindent
(iii) \ The diagram $\tilde{A}_5\oplus \tilde{A}_2\oplus \tilde{A}_1$ corresponds to an elliptic fibration on $X$ with singular fibers of type $I_6$, $IV$ and $I_2$.  For example, six synthemes 
$$(14,25,36), (15,26,34), (14,23,56), (15,24,36), (14,26,35),(15,23,46)$$ 
are components of a singular fiber of type $I_6$, three duads $12,13,16$ are components of a singular fiber of type $IV$.  The pair of the duad $45$ and
$(-2)$-vector $(145,236)$ forms the subdiagram of type
$\tilde{A}_1$. The duad $56$ is a 2-section of this fibration.

\begin{remark}
Note that there exists a
nodal curve $C$ such that $C$ and the duad $45$ form the
singular fiber of type $I_2$. 
If we denote by $2f$ the class of a multiple fiber of this fibration, then
$$(145,236) = f - C.$$
The $2$-section $56$ meets $C$, but not $(145,236)$. 
Note that $C$ does not appear in $40$ $(-2)$-vectors. 
\end{remark}

\medskip
\noindent
(iv) \ The diagram $\tilde{A}_3\oplus \tilde{A}_3 \oplus \tilde{A}_1 \oplus \tilde{A}_1$ corresponds to an elliptic fibration on $X$ with two singular fibers
of type $I_4$ and one singular fiber of type $III$.
For example, four duads $24,25,34,35$ and four synthemes
$$(12,36,45), (14,23,56), (13,26,45),(15,23,46)$$ 
define
two singular fibers of type $I_4$ respectively, and the pair of the duad $16$ and the syntheme $(16,23,45)$ defines a singular fiber of type $III$.  The remaining subdiagram of type $\tilde{A}_1$ consists of
two $(-2)$-vectors $(123,456)$ and $(145,236)$. The duad $13$ is a 2-section of this fibration.  

Denote by $D(\Gamma)$ the finite polyhedron defined by
40 $(-2)$-vectors in $\Gamma$.  Combining Proposition \ref{Vinberg} and Lemma \ref{parabolic}, we have the following theorem. 

\begin{theorem}\label{auto}
The group $W(\Gamma)$ is of  finite index in ${\rm O}(\Num(X))$, and
$$\Aut(D(\Gamma)) (\cong {\rm O}(\Num(X))/\{\pm 1\}\cdot W(\Gamma))$$ 
is isomorphic to the semi-direct product $\mathfrak{S}_6\cdot {\bf Z}/2{\bf Z}$ where $\mathfrak{S}_6$ is the symmetric group of the six letters $\{1, \ldots, 6\}$ and ${\bf Z}/2{\bf Z}$ is generated by an outer automorphism of $\mathfrak{S}_6$.   
\end{theorem}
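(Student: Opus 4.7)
The plan is to combine Vinberg's criterion (Proposition \ref{Vinberg}) with Lemma \ref{parabolic} to obtain the finite-index statement, and then to identify the quotient ${\rm O}(\Num(X))/\{\pm 1\} \cdot W(\Gamma) \cong \Aut(D(\Gamma))$ with the graph-automorphism group $\Aut(\Gamma)$; the proposition preceding Theorem \ref{auto} will then give $\Aut(\Gamma) \cong \Aut(\mathfrak{S}_6) \cong \mathfrak{S}_6 \cdot {\bf Z}/2{\bf Z}$, finishing the proof.

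For the finite-index assertion I would verify the hypotheses of Proposition \ref{Vinberg}. The set $\Delta$ of $40$ vectors is finite by construction, and the incidence tables in Sections \ref{s5}--\ref{s6} together with Lemmas \ref{complete} and \ref{10divisors} show that every pair in $\Delta$ is joined by at most a double edge, so $\Gamma$ contains no $m$-tuple lines with $m \geq 3$. Non-degeneracy of $\Gamma$ follows because, after choosing signs of the non-effective $(-2)$-divisors appropriately, $K(\Gamma)$ contains the ample cone $D(X) \subset \Num(X) \otimes {\bf R}$, which is strictly convex since $\Num(X)$ has Lorentzian signature $(1,9)$. Finally, by Lemma \ref{parabolic} every maximal parabolic subdiagram of $\Gamma$ already has rank $8$, so any connected parabolic subdiagram is automatically a component of some rank-$8$ parabolic subdiagram. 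Proposition \ref{Vinberg} then yields $[{\rm O}(\Num(X)) : W(\Gamma)] < \infty$.

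It remains to identify the quotient. Using the stated isomorphism $\Aut(D(\Gamma)) \cong {\rm O}(\Num(X))/\{\pm 1\} \cdot W(\Gamma)$, it suffices to show $\Aut(D(\Gamma)) = \Aut(\Gamma)$. Any isometry preserving the polyhedron $D(\Gamma)$ permutes its $40$ walls while preserving their pairwise intersection numbers, which gives a homomorphism $\Aut(D(\Gamma)) \to \Aut(\Gamma)$; this map is injective because the $40$ vectors span $\Num(X) \otimes {\bf Q}$. Conversely, every automorphism of $\Gamma$ permutes the $40$ vectors while preserving their Gram matrix, and therefore extends uniquely to a ${\bf Q}$-linear isometry of $\Num(X) \otimes {\bf Q}$; integrality will follow from the unimodularity of $\Num(X) \cong U \oplus E_8$ together with the fact that the $40$ vectors generate a finite-index sublattice, and preservation of $D(\Gamma)$ is automatic since $\Delta$ is preserved setwise. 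Combined with $\Aut(\Gamma) \cong \Aut(\mathfrak{S}_6) \cong \mathfrak{S}_6 \cdot {\bf Z}/2{\bf Z}$, this gives the claimed isomorphism.

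The main obstacle is the surjectivity $\Aut(\Gamma) \to \Aut(D(\Gamma))$: confirming that every graph automorphism lifts to an \emph{integral} isometry of $\Num(X)$ is a delicate lattice-theoretic point that requires a careful analysis of the $40$-vector configuration inside $U \oplus E_8$. An alternative, possibly cleaner, route would be to realize the $\mathfrak{S}_6 \cdot {\bf Z}/2{\bf Z}$ action geometrically via permutations of the six contracted curves in each family $\calA$ and $\calB$, together with the switch involution on $Y$ descending to $X$, thereby sidestepping the integrality question entirely.
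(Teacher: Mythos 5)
Your proposal follows essentially the same route as the paper: the paper's entire argument is ``Combining Proposition \ref{Vinberg} and Lemma \ref{parabolic}, we have the following theorem,'' with the identification of $\Aut(D(\Gamma))$ left implicit via the preceding proposition that $\Aut(\Gamma)\cong\Aut(\mathfrak{S}_6)$. You supply strictly more detail than the paper does (verification of Vinberg's hypotheses, the comparison $\Aut(D(\Gamma))\cong\Aut(\Gamma)$, and an honest flag of the integrality subtlety, which is indeed most cleanly handled by your suggested geometric realization of the $\mathfrak{S}_6\cdot{\bf Z}/2{\bf Z}$-action), so this is consistent with the paper's proof.
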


Recall that $\Aut(Y)$ is generated by $\PGL(3,{\bf F}_4)$, a switch and 
168 Cremonat transformations, where $Y$ is the covering $K3$ surface of $X$.
Among these automorphisms, the subgroup $\mathfrak{S}_6\cdot  {\bf Z}/2{\bf Z}$ and ten Cremonat transformations preserve 12 nodal curves
$E_1,\ldots, E_{12}$.

\medskip
\noindent
{\bf Conjecture.}   {\it The subgroup $\mathfrak{S}_6\cdot  {\bf Z}/2{\bf Z}$ and 
ten Cremonat transformations descend to automorphisms of $X$. }

\medskip
\noindent
Let $G$ be the subgroup of ${\rm O}(\Num(X))$ generated by reflections associated with ten non-effective divisors in $\Gamma$.  If the conjecture is true, then ten Cremonat
transformations descend to ten generators of $G$.
By an argument in Vinberg \cite{V2}, 1.6, we have the following Corollary. 
\begin{corollary} 
Assume the conjecture holds.  Then $\Aut(X)$ is generated 
by $\Aut(D(\Gamma)) (\cong \mathfrak{S}_6\cdot  {\bf Z}/2{\bf Z})$ and $G$, up to finite groups.
\end{corollary}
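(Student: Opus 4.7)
The plan is to apply Vinberg's coset-decomposition argument \cite{V2}, \S1.6, reducing the Corollary to the finite-index statement already secured by Theorem \ref{auto} together with the classical fact (recalled just before Proposition \ref{Vinberg}) that the representation $\rho:\Aut(X)\to \Aut(D(X))={\rm O}(\Num(X))/\{\pm 1\}\cdot W(X)$ has finite kernel and finite cokernel. Since the cokernel of $\rho$ is finite, proving that $\Aut(X)$ is generated by the (conjecturally lifted) subgroups $\Aut(D(\Gamma))$ and $G$ up to finite groups is equivalent to proving that $\Aut(D(X))$ is generated, as a group, by the images of these two subgroups under $\rho$.

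The first main step is to exploit the structure of $W(\Gamma)$ inside ${\rm O}(\Num(X))$. By Theorem \ref{auto} and the general theory of Coxeter polyhedra, $D(\Gamma)$ is a fundamental domain for $W(\Gamma)$ on the positive cone $P(X)$, and $\Aut(D(\Gamma))$ normalises $W(\Gamma)$ in ${\rm O}(\Num(X))/\{\pm 1\}$, giving a product decomposition ${\rm O}(\Num(X))/\{\pm 1\}=W(\Gamma)\cdot\Aut(D(\Gamma))$. Projecting via $p:{\rm O}(\Num(X))/\{\pm 1\}\to \Aut(D(X))$, whose kernel is $W(X)/\{\pm 1\}$, I would observe that the $30$ reflections in nodal curves appearing in $\Gamma$ already lie in $W(X)$ and so vanish under $p$, whereas the ten reflections from $G$ map to a subgroup $\bar{G}\subset \Aut(D(X))$. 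Hence $p(W(\Gamma))=\langle \bar{G}\rangle$, and applying $p$ to the product decomposition above yields $\Aut(D(X))=\langle\bar{G}\rangle\cdot \bar{A}$ as a set, where $\bar{A}=p(\Aut(D(\Gamma)))$; in particular $\Aut(D(X))$ is generated, as a group, by $\bar{G}$ and $\bar{A}$.

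The second step is to transport this back to $\Aut(X)$ via the conjecture. Let $\widetilde{H}\subset \Aut(X)$ be the subgroup generated by the lift of $\Aut(D(\Gamma))\cong\mathfrak{S}_6\cdot\bbZ/2\bbZ$ and by the ten Cremona transformations, which by assumption act on $\Num(X)$ through the generators of $G$. Then $\rho(\widetilde{H})\supset \langle \bar{A},\bar{G}\rangle = \Aut(D(X))$, so $\rho(\widetilde{H})=\Aut(D(X))$. Because $\ker\rho$ is finite, $[\Aut(X):\widetilde{H}]\leq |\ker\rho|<\infty$, which is exactly the assertion of the Corollary.

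The main obstacle, beyond invoking the conjecture itself, is essentially a verification rather than a deep point: one must check that the ten Cremona transformations, after descent to $X$ under the conjecture, indeed induce on $\Num(X)$ the reflections that generate $G$, and similarly that the descended $\mathfrak{S}_6\cdot\bbZ/2\bbZ$-action realises the subgroup $\bar{A}$ used above. Once these identifications are pinned down, the argument is formal and the Corollary follows by Vinberg's standard reasoning in \cite{V2}, \S1.6.
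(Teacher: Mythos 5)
Your proposal is correct and follows essentially the same route as the paper, which offers no proof beyond the citation of Vinberg \cite{V2}, \S 1.6: you reconstruct exactly that argument, namely the decomposition of ${\rm O}(\Num(X))/\{\pm 1\}$ coming from the fundamental polyhedron $D(\Gamma)$ of the finite-index group $W(\Gamma)$ (Theorem \ref{auto}), the observation that the $30$ nodal reflections die in $\Aut(D(X))$ while the ten remaining ones survive as $\bar G$, and the transfer back to $\Aut(X)$ via the finite-kernel-and-cokernel map $\rho$ together with the lifts supplied by the conjecture. The only caveat, inherited from the paper's own phrasing of Theorem \ref{auto}, is that $W(\Gamma)$ and $W(X)$ are not normal in ${\rm O}(\Num(X))$, so the product decompositions should be read as holding up to finite index (equivalently, one works inside $\langle W(X),G,\Aut(D(\Gamma))\rangle$, which contains $W(\Gamma)$ and hence has finite index); this weakening still yields the corollary, which is only asserted up to finite groups.
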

%\proof{Let $g \in \Aut(X)$.  Then $g(D)$ is an ample divisor.  It follows from Theorem \ref{auto} that there exists an automorphism $\varphi \in G$ satisfying $\varphi(g(D)) \in D(\Gamma)$.  The assertion is now obvious.}

\end{document}